\DeclareMathOperator{\tr}{ tr}
\title{Non-ergodicity on the $\SU(2)$-character varieties}
\author{Fayssal Saadi}
\email{fayssal.saadi@univ-tlse3.fr}
\address{Institut de Mathématiques de Toulouse, Université Paul Sabatier Toulouse 3, and UMPA, ENS de Lyon}  
\date{\today}
\newtheorem{theorem}{Theorem}[section]
\newtheorem*{theorem*}{Theorem}
\theoremstyle{definition}
\newtheorem{definition}{Definition}[section]
\newtheorem{ex}{Example}[section]
\newtheorem{prop}{Proposition}
\newtheorem{lemma}{Lemma}
\newtheorem{remark}{Remark}[section]
\newtheorem{Cor}{Corollary}[section]
\def\Aut{\sf{Aut}}
\def\SO{{\sf{SO}}}
\def\SU{{\sf{SU}}}
\def\SL{{\sf{SL}}}
 \def\Hom{{\sf{Hom}}}
\def\tr{{\sf{tr}}}
\begin{document}

\maketitle
\maketitle

\begin{abstract}
 We describe the dynamics of a group $\Gamma$ generated by Dehn twists along two filling multi-curves or a family of filling curves on the $\SU(2)$-representation variety of closed surfaces. Consequently,  we provide explicit $\Gamma$-invariant rational functions on the representation variety of the genus two closed surface $S_2$ for some pair of multi-curves. We establish a similar result for the $\SU(2)$-character variety of genus four non-orientable surfaces $N_4$ for some family of filling curves.    
\end{abstract}
\section{Introduction}
For a closed surface $\Sigma$, we define the $\SU(2)$-representation variety of $\Sigma$, denoted by $\Hom(\pi_1(\Sigma),\SU(2))$, to be the space of all homomorphisms of the fundamental group of $\Sigma$ into the special unitary group $\SU(2)$. The group $\SU(2)$ acts by conjugacy on $\Hom(\pi_1(\Sigma),\SU(2))$ defining a principal $\SU(2)$-bundle over the irreducible  representations of the character variety: 

$$X(\pi_1(\Sigma),\SU(2)):=  \Hom(\pi_1(\Sigma),\SU(2)) / \SU(2)$$

The group of homeomorphisms of $\Sigma$ acts on the representation variety by pre-composition and the action of elements isotopic to the identity is absorbed by the $\SU(2)$ action, giving rise to a natural action of the mapping class group of $\Sigma$ on  $X(\pi_1(\Sigma),\SU(2))$.

For an orientable surface $S$, Goldman \cite{G1} showed that the character variety inherits a symplectic form $\omega$ and proved that the mapping class group acts ergodically with respect to the induced measure \cite{GX1}. A natural question is then to ask whether a subgroup $\Gamma$ of the mapping class group acts ergodically or not. The first result in this direction is given by Goldman and Xia \cite{GX2} by proving that on the twice-punctured torus, the Torelli group acts ergodically on its character variety. In \cite{FM}, Funar and Marché showed that the first Johnson subgroup, which is the group generated by Dehn-twists along separating curves of $\Sigma$, acts ergodically on the character variety. Recently, Marché and Wolff \cite{MW} proved that any non-central normal subgroup of the mapping class group acts topologically 
transitively on its $\SU(2)$-character variety.

Brown \cite{Br} on the other hand proved that a pseudo-Anosov element (i.e. the iterates of such an element preserve no essential simple closed curve) admits an elliptic fixed point or a double elliptic fixed point for some relative character varieties of the punctured torus $S_{1,1}$. This recently led to fully proving the non-ergodicity of such elements by applying KAM theory in \cite{FGLM} by Forni, Goldman, Lawton, and Matheus. 

In parallel, the ergodicity of the mapping class group of non-orientable surfaces was proved by Palesi \cite{P} after introducing a measure $\upsilon$ invariant by the mapping class group.

In this article, we are interested in subgroups $\Gamma$ generated by Dehn twists along a pair of filling multi-curves or along a family of filling curves. Due to the non-commutativity of $\SU(2)$ and the fact that 
the curves fill the surface, it seems that the natural answer would be the ergodicity of $\Gamma$ on the representation variety. Surprisingly, we find in the non-orientable setting:

\begin{theorem}\label{Th1}
On the closed non-orientable surface of genus four $N_4$, there exists a family of filling curves whose associated Dehn twists generate a group $\Gamma$ acting non-ergodically on the character variety $X(\pi_1(N_4),\SU(2))$.
\end{theorem}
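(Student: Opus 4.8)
The plan is to prove non-ergodicity in the strongest possible way: by producing a single non-constant $\Gamma$-invariant rational function $F$ on $X(\pi_1(N_4),\SU(2))$, exactly as in the genus-two statement announced in the abstract. Once such an $F$ is available, non-ergodicity is immediate: choosing values $a<b$ so that $0<\upsilon(F^{-1}([a,b]))<1$, where $\upsilon$ is Palesi's invariant measure, the set $F^{-1}([a,b])$ is $\Gamma$-invariant, measurable, and of intermediate measure, which contradicts ergodicity.

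First I would fix an explicit polygonal model of $N_4$ realizing $\pi_1(N_4)=\langle a_1,a_2,a_3,a_4\mid a_1^2a_2^2a_3^2a_4^2\rangle$, and write each curve of the chosen filling family as an explicit word in the $a_i$. The action of a Dehn twist $T_c$ on the character variety is then governed by the classical description: $T_c$ fixes the conjugacy class of $\rho(c)$ (hence fixes the trace function $t_c=\tfrac12\tr\rho(c)$), fixes $t_\gamma$ for every $\gamma$ disjoint from $c$, and on a curve $\gamma$ meeting $c$ it replaces $\rho(\gamma)$ by the word obtained by inserting $\rho(c)^{\pm1}$ at each geometric intersection point. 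Converting this into trace coordinates by repeated use of the $\SU(2)$ identities $\tr(XY)+\tr(XY^{-1})=\tr(X)\tr(Y)$ and $\tr(X^{-1})=\tr(X)$ yields completely explicit polynomial formulas for the action of each generator $T_{c_i}$ of $\Gamma$.

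The heart of the argument is then to guess and verify the invariant $F$. The guiding principle is that $F$ must be preserved by the twist flow of every curve $c_i$ simultaneously; because the curves fill, no individual trace function can work, so $F$ has to be a genuinely mixed combination of trace coordinates dictated by the combinatorics of the family. I would exploit the symmetry of the chosen configuration — most naturally the involution coming from the orientation double cover $S_3\to N_4$, or a symmetry permuting the crosscaps — to narrow the candidates to a short list of symmetric rational expressions, and then check $T_{c_i}(F)=F$ for each $i$ directly from the polynomial formulas of the previous step; each such check should reduce, after the trace identities, to the single fact that $T_{c_i}$ fixes $t_{c_i}$. The main obstacle is precisely this simultaneous invariance: verifying that curves filling the surface can nonetheless leave $F$ unchanged under every twist is the whole content of the theorem, and it is what forces the specific, non-generic choice of filling family. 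Finally I would confirm that the resulting $F$ is genuinely non-constant on $X(\pi_1(N_4),\SU(2))$, by evaluating it at two explicit representations giving different values, which completes the proof.
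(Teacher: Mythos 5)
Your overall strategy is the same as the paper's: exhibit an explicit non-constant $\Gamma$-invariant rational function on $X(\pi_1(N_4),\SU(2))$, built from trace coordinates via the identity $\tr(XY)+\tr(XY^{-1})=\tr(X)\tr(Y)$, and verify non-constancy by showing the relevant coordinates can be varied independently. However, there is a genuine gap: you never actually produce either the filling family or the invariant function. You correctly identify that ``to guess and verify the invariant $F$'' is the heart of the argument and the whole content of the theorem, but then you only describe a search heuristic (symmetry of the configuration, short list of candidates) without carrying it out. Since the theorem is an existence statement whose entire substance is the explicit example, a proof that stops at ``I would guess $F$ and check it'' has not proved anything; there is no a priori reason such an $F$ exists for \emph{some} filling family, and indeed the paper stresses that the naive expectation is ergodicity.

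For comparison, the paper's construction takes three filling curves on $N_4$ pairwise intersecting once, reads off the square relations $A_1B_1=B_2A_2$, $C_2^{-1}A_1C_1=A_2$, $C_2^{-1}B_1C_1=B_2$, and derives the invariant $\tr(A_1)/\tr(A_2)=\tr(B_2)/\tr(B_1)$. Moreover, the invariance mechanism is cleaner than the one you propose: rather than verifying $T_{c_i}(F)=F$ in trace coordinates for each generator, the paper uses the fact that the twist along $\gamma_i$ only modifies the generator transverse to $\gamma_i$ and fixes $\pi_1(N_4\setminus\gamma_i)$ pointwise, so any function factoring through each restriction map $P_i$ simultaneously is automatically $\Gamma$-invariant. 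The key observation making this possible is Lemma \ref{One Square}-type separation of variables: the conjugacy relations let one eliminate the $C$'s (via the isometry $\Phi_C$ preserving angles) and the square relation (via traces), leaving a relation that splits into a function of $(A_1,A_2)$ equal to a function of $(B_1,B_2)$. Your proposal does not contain this separation-of-variables idea, which is what actually makes the simultaneous invariance achievable despite the curves filling the surface. Non-constancy is then checked by the surjectivity of $\rho\mapsto(A_1,A_2)$, which matches your final step in spirit.
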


The application of Theorem 6.1 by Fathi \cite{F} ensures the existence of pseudo-Anosov elements in such a group. Therefore:  

\begin{Cor}
There exists a pseudo-Anosov element on $N_4$ which does not act ergodically on the character variety $X(\pi_1(N_4),\SU(2))$. 
\end{Cor}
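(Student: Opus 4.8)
The plan is to deduce the Corollary directly from Theorem~\ref{Th1} together with Fathi's construction of pseudo-Anosov homeomorphisms. The conceptual point is that non-ergodicity of the whole group $\Gamma$ is automatically inherited by every single element of $\Gamma$: if $A \subseteq X(\pi_1(N_4),\SU(2))$ is a $\Gamma$-invariant measurable set with $0 < \upsilon(A) < \upsilon(X(\pi_1(N_4),\SU(2)))$ — the witness of non-ergodicity produced by Theorem~\ref{Th1} — then $A$ is in particular invariant under the cyclic subgroup generated by any $\phi \in \Gamma$. Hence such a $\phi$ cannot act ergodically, since $A$ is then a $\phi$-invariant set of intermediate measure. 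It therefore suffices to exhibit a single pseudo-Anosov element lying inside $\Gamma$.

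For this I would invoke Theorem~6.1 of Fathi~\cite{F}. Because the curves defining the Dehn twists that generate $\Gamma$ fill $N_4$, Fathi's criterion applies and guarantees that $\Gamma$ contains a pseudo-Anosov mapping class $\phi$. Concretely, Fathi's theorem produces pseudo-Anosov elements as suitable products of the generating twists precisely when the underlying curves fill the surface, which is exactly the filling hypothesis built into the family of Theorem~\ref{Th1}. One should check that the relevant hypotheses of Theorem~6.1 are met in the non-orientable setting of $N_4$, and that the notion of pseudo-Anosov used there coincides with the one appearing in the Corollary; this is where I expect the only genuine care to be needed.

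Fixing such a $\phi \in \Gamma$ and combining with the first paragraph, the $\Gamma$-invariant set $A$ is $\phi$-invariant and has intermediate $\upsilon$-measure, so $\phi$ does not act ergodically on $X(\pi_1(N_4),\SU(2))$; this $\phi$ is the desired element. The main obstacle is thus not the ergodic-theoretic deduction, which is essentially formal, but rather the verification that Fathi's existence theorem genuinely furnishes a pseudo-Anosov \emph{inside} the specific group $\Gamma$ of Theorem~\ref{Th1} on the non-orientable surface $N_4$, rather than merely in the ambient mapping class group.
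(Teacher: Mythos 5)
Your proposal is correct and follows exactly the paper's (very brief) argument: Fathi's Theorem~6.1 supplies a pseudo-Anosov element inside $\Gamma$ because the curves fill $N_4$, and the $\Gamma$-invariant set of intermediate measure from Theorem~\ref{Th1} is a fortiori invariant under that single element, so it cannot act ergodically. Your added caution about verifying Fathi's hypotheses in the non-orientable setting is reasonable but does not change the route, which is the same as the paper's.
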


On the orientable surface of genus 2, we can show that $\Aut^{+}(\pi_1(S_2))$ acts ergodically on $\Hom(\pi_1(S_2),\SU(2))$ (This will be the content of the note \cite{S}). But for a suitable $\Gamma$, we establish:   

\begin{theorem}\label{Th2}
Let $S_2$ be the orientable surface of genus two. Then, there exist two filling multi-curves whose associated Dehn twists generate a group $\Gamma$ acting non-ergodically on $\Hom(\pi_1(S_2),\SU(2))$. This $\Gamma$ could be chosen to contain a finite index subgroup of the Veech group of a square-tiled surface.
\end{theorem}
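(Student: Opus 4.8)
The plan is to realize $S_2$ as a square-tiled surface $X\to T^2$ and to take for the two filling multi-curves the cores $a=a_1\cup\cdots\cup a_r$ of the horizontal cylinders and $b=b_1\cup\cdots\cup b_s$ of the vertical cylinders; these are disjoint within each family, cross each other, and fill $X$. Writing $\Gamma=\langle T_a,T_b\rangle$ for the two associated multitwists, non-ergodicity on $\Hom(\pi_1(S_2),\SU(2))$ will follow once I exhibit a single non-constant rational function $F$ that is invariant under both $T_a$ and $T_b$: indeed such an $F$ is continuous and $\Gamma$-invariant, so for a non-critical value $c$ the set $\{F<c\}$ is $\Gamma$-invariant of measure strictly between $0$ and the total mass, which is incompatible with ergodicity. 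Thus the whole problem reduces to producing such an $F$ explicitly and certifying that it is non-constant.

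To find $F$, I would fix a generating set of $\pi_1(S_2)$ adapted to the grid of squares, so that each twist $T_{a_i},T_{b_j}$ acts on $\pi_1$ by the familiar rule ``right-multiply every generator crossing the curve by the corresponding boundary word''. On $\Hom(\pi_1(S_2),\SU(2))$ this makes $T_a$ and $T_b$ into explicit polynomial automorphisms of the matrix-entry (equivalently trace) coordinates. By construction $T_a$ preserves each $\tr\rho(a_i)$ and $T_b$ preserves each $\tr\rho(b_j)$, while Goldman's bracket formula controls how $\tr\rho(a_i)$ varies under $T_b$ and conversely. Since $a$ and $b$ fill, no single trace function is simultaneously invariant, so instead I would solve the simultaneous invariance equations $F\circ T_a=F=F\circ T_b$ for a rational $F$ in these coordinates; the extra symmetry of the square-tiled model (in particular the hyperelliptic involution, which commutes with every $T_{a_i},T_{b_j}$, together with the cylinder structure) is what I expect to force the algebra of joint invariants to be strictly larger than the constants. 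I anticipate $F$ being a ratio of two $\Gamma$-semi-invariant trace polynomials built from the holonomies $\rho(a_i),\rho(b_j)$ along the grid, with non-constancy verified by evaluating $F$ at two explicit representations.

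For the Veech-group statement I would invoke Thurston's construction, equivalently the affine structure of $X$. The surface $X$ carries a translation structure for which $a$ and $b$ are unions of cylinder cores, and the multitwists $T_a,T_b$ are affine automorphisms; the derivative homomorphism $D\colon \mathrm{Aff}(X)\to \SL(X)\subseteq \SL(2,\Z)$ sends them to the two parabolics $\left(\begin{smallmatrix}1 & n\\ 0 & 1\end{smallmatrix}\right)$ and $\left(\begin{smallmatrix}1 & 0\\ m & 1\end{smallmatrix}\right)$. For a suitable choice of $X$ (and of the exponents realizing the multitwists as affine maps) these two parabolics generate a finite-index subgroup of $\SL(2,\Z)$, which is itself a finite-index subgroup of the Veech group $\SL(X)$; this gives the final sentence of the theorem.

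The main obstacle is precisely the middle step: proving that two \emph{filling} multitwists share a non-constant rational invariant at all. This is the surprising point, because filling heuristically suggests that the Poisson brackets of the functions $\tr\rho(a_i),\tr\rho(b_j)$ generate the full tangent distribution, hence ergodicity; the content is to show that for the chosen symmetric configuration this distribution degenerates on a full-measure set, equivalently that the explicit invariance equations admit a non-constant rational solution. Guessing the multi-curve configuration so that both the computation stays tractable and the joint invariant is genuinely non-trivial, and then certifying its non-constancy, is where I expect the real work to lie.
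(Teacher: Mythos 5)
Your overall framework coincides with the paper's: realize $S_2$ as a square-tiled surface, take the horizontal and vertical cylinder core curves as the two filling multi-curves, reduce non-ergodicity to exhibiting a non-constant invariant rational function, and get the Veech-group statement from the derivative homomorphism sending the two affine multitwists to parabolics generating a finite-index subgroup of $\SL(2,\Z)$ (the paper uses a three-square genus-two origami where these derivatives are $\left(\begin{smallmatrix}1&2\\0&1\end{smallmatrix}\right)$ and $\left(\begin{smallmatrix}1&0\\2&1\end{smallmatrix}\right)$). However, there is a genuine gap, and you name it yourself: you never produce the invariant function, you only set up equations $F\circ T_a=F=F\circ T_b$ and express the hope that symmetry (the hyperelliptic involution) forces a non-constant solution. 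That existence claim \emph{is} the theorem; without it the proposal proves nothing. Moreover the mechanism you guess is not the one that works: the hyperelliptic involution commutes with every twist on any genus-two surface, filling or not, and by itself yields no joint invariant.

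The paper closes this gap with two ingredients your proposal lacks. First (Proposition \ref{P6}), the ergodic components of the group generated by the twists along one multi-curve $\gamma$ coincide almost everywhere with the fibers of the restriction map $P_\gamma:\Hom(\pi_1(S),\SU(2))\to\Hom(\pi_1(S\setminus\gamma),\SU(2))$; hence \emph{any} measurable function factoring through both $P_A$ (the $A$-variables of the vertical complement) and $P_B$ simultaneously is $\Gamma$-invariant --- there is no need to track how $T_b$ deforms the $A$-coordinates, which is where your ``solve the invariance equations'' step would bog down. Second (Lemma \ref{One Square}), viewing $\SU(2)\subset\mathbb{H}$: if a rectangle relation $A_IB_J=B_IA_J$ holds with $A_I$ conjugate to $A_J$ and $B_I$ conjugate to $B_J$, then the directions $[A_I-A_J]$ and $[B_I-B_J]$ in $P(\mathbb{H}^0)$ are equal; this single identity expresses one quantity purely in the $A$-variables and purely in the $B$-variables at once. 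The real trick is then choosing the square-tiled surface so that its one-vertex combinatorics forces the conjugacy hypotheses automatically: on the paper's three-square origami, the relation $a_1b_2=b_1a_1$ makes $B_1$ conjugate to $B_2$, and $a_2a_3$ is always conjugate to $a_3a_2$, so the lemma gives $[A_2A_3-A_3A_2]=[B_1-B_2]$ (on the second example, $[A_1-A_2]=[B_1-B_2]$), a manifestly non-constant projective-valued invariant whose coordinate ratios are the desired rational functions. Note also that your fallback hope of ``trace polynomial'' invariants is doomed: by Charles--Marché, as the paper remarks, no pseudo-Anosov (and $\Gamma$ contains such elements) preserves a non-constant polynomial function, so the conjugacy/direction trick, or something like it, is unavoidable.
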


\subsection*{Organization of the paper:} 
\begin{itemize}
    \item In Section \ref{Square-tiled S}, we introduce the notion of square-tiled surfaces as a way to represent pairs of filling multi-curves. This provide a way to construct pseudo-Anosov homeomorphisms and to realize some elements of $\Gamma$ as affine transformations. 
    \item Section \ref{Goldman Flow} is devoted to describing the action of a single Dehn-twist defining Goldman's flows. 
    \item Section \ref{Foliations} is somehow a generalization of the action of Dehn-twists along two filling multi-curves to natural flows on algebraic varieties, in an attempt to read the dynamics using the ideal defining the algebraic variety. 
    \item In the last Section \ref{Examples}, we prove Theorems \ref{Th2} and \ref{Th1} by providing examples of $\Gamma$'s admitting explicit invariant rational functions. 
\end{itemize}
The search for rational functions follows from the description in Section \ref{Foliations}. However, it is not possible to aim for polynomial invariant functions. Indeed, it was pointed out to me after a discussion with J. Marché based on Theorem 1.1 in \cite{CM} that no pseudo-Anosov element on orientable surfaces admits an invariant polynomial function on the $\SU(2)$-character variety. 

\subsection*{Acknowledgement}

I would like to warmly thank my advisors Louis Funar, Erwan Lanneau, and Abdelghani Zeghib for their continuous support. I also would like to thank Julien Marché and Maxime Wolff for their helpful remarks.   

This work was supported by the LABEX MILYON (ANR-10-LABX-0070) of Université
de Lyon, within the program “Investissements d’Avenir” (ANR-11-IDEX-0007) operated
by the French National Research Agency (ANR).

\section{multi-twists and square-tiled surfaces}\label{Square-tiled S}
One way of generating pseudo-Anosov homeomorphisms on a surface $S$ is by considering the group generated by Dehn-twists along two filling multi-curves. 
\begin{definition}
A square-tiled surface is a finite collection of squares on $\mathbb{C}$, where edges are glued together two by two via a translation or a half-translation (i.e. a similarity with linear part $-1$).
\end{definition} 
The square-tiled surfaces are naturally endowed with a half-translation structure i.e. a structure where the transition maps are of the form $z \mapsto \pm z+c$, for some $c \in \mathbb{C}$. An affine transformation of $S$ is then a transformation such that it is affine for the charts of the half-translation structure. The group generated by all the linear parts of such transformations is called the Veech group of $S$. We shall point out here that $\SL(2,\mathbb{R})$ (in particular $\SL(2,\mathbb{Z})$) acts on the set of square-tiled surfaces by post-composition i.e. if $S$ is a half-translation surface and $A \in \SL(2,\mathbb{R})$ then $A.S$ is the half-translation surface obtained by composing the charts of $S$ with $A$. In particular, we have that the stabilizer of a half-translation surface is its Veech group. The following then holds for square-tiled surfaces (See \cite{GJ}): 
\begin{prop}
The Veech group of a square-tiled surface is a finite index subgroup of $\SL(2,\mathbb{Z})$.
\end{prop}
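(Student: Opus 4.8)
The plan is to exploit the fact that a square-tiled surface $S$ is canonically a branched cover of the standard flat torus $E = \R^2/\Z^2$, the covering map being obtained by developing each unit square onto the fundamental domain $[0,1]^2$ and sending all vertices to the single marked point. The Veech group of the torus is exactly $\SL(2,\Z)$, so the two assertions to prove are that $V(S) \subseteq \SL(2,\Z)$ and that this inclusion has finite index; I would treat them separately.

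For the containment, I would record that the half-translation structure endows $S$ with an integer structure: the holonomy vectors of the edges of the squares are $\pm(1,0)$ and $\pm(0,1)$, and more generally the holonomy of every saddle connection lies in $\Z^2$. If $\phi$ is an affine diffeomorphism of $S$ with linear part $A \in \SL(2,\R)$, then $\phi$ permutes the saddle connections and acts on their holonomy vectors by $A$. Since these holonomies generate $\Z^2$ and $A$ must carry this generating set into itself, $A$ preserves the lattice $\Z^2$; being area-preserving it has determinant one, so $A \in \SL(2,\Z)$. (For the half-translation gluings one argues identically, the sign $\pm 1$ being absorbed on passage to linear parts.)

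For the finite index, I would realize $V(S)$ as the stabilizer of $S$ for the natural $\SL(2,\Z)$-action on square-tiled surfaces. A matrix $A \in \SL(2,\Z)$ carries $S$ to the surface $A\cdot S$ obtained by post-composing the charts with $A$; because $A$ has integer entries, $A\cdot S$ is again square-tiled, and because $\det A = 1$ it has the same area, hence is tiled by exactly $n$ unit squares, where $n$ is the number of squares of $S$. The set of square-tiled surfaces with $n$ squares is finite, each being encoded by a pair of permutations in the symmetric group $S_n$ recording the horizontal and vertical edge-gluings up to simultaneous conjugation, so the $\SL(2,\Z)$-orbit of $S$ is finite. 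By the orbit–stabilizer theorem $\mathrm{Stab}(S)$ has finite index in $\SL(2,\Z)$, and this stabilizer is precisely $V(S)$, since a lattice matrix fixes $S$ up to equivalence of half-translation surfaces exactly when it is the linear part of an affine self-map. Equivalently, one may argue through the covering $\pi\colon S \to E$ that $A \in \SL(2,\Z) = V(E)$ lies in $V(S)$ iff the corresponding affine map of $E$ lifts to $S$, a condition cut out by the finite monodromy data of the cover and hence defining a finite-index subgroup.

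The main obstacle is the clean identification of $V(S)$ with the stabilizer of the $\SL(2,\Z)$-action: one must verify both that every lattice matrix fixing $S$ genuinely arises as the linear part of an affine diffeomorphism, giving $\mathrm{Stab}(S) \subseteq V(S)$, and that re-cutting $A\cdot S$ into unit squares keeps the tile count equal to $n$, so the orbit remains inside the finite set to which orbit–stabilizer applies. The containment $V(S) \subseteq \SL(2,\Z)$, by contrast, is essentially formal once the integer structure is recorded.
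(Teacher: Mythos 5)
Your argument is correct and is the standard one, but it is worth noting that the paper does not prove this proposition at all: it simply cites Gutkin--Judge \cite{GJ}, whose actual theorem is the characterization of square-tiled surfaces as those whose Veech group is \emph{commensurable} with $\SL(2,\mathbb{Z})$. Your two-step proof --- integrality of saddle-connection holonomies forcing $V(S)\subseteq\SL(2,\mathbb{Z})$, then orbit--stabilizer on the finite set of $n$-square origamis encoded by permutation pairs --- is the usual self-contained route (as in Herrlich's notes \cite{He}) and is more elementary than invoking the arithmeticity criterion. One caveat applies equally to your proof and to the proposition as stated: the containment $V(S)\subseteq\SL(2,\mathbb{Z})$ needs the holonomies of saddle connections to generate all of $\Z^2$, i.e.\ the square-tiled surface to be \emph{reduced} (equivalently, the cover of the torus not to factor through a larger torus). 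For the torus tiled by two unit squares side by side, with no singular or marked points, the period lattice is $2\Z\times\Z$ and the Veech group contains $\left(\begin{smallmatrix}1&0\\1/2&1\end{smallmatrix}\right)$, so it is only conjugate to, not contained in, a finite-index subgroup of $\SL(2,\mathbb{Z})$. Your assertion that the edge holonomies ``generate $\Z^2$'' silently assumes the vertices are marked and the cover is primitive; you should either add that hypothesis or mark all integer points. With that proviso, both halves of your argument, including the identification of $V(S)$ with the stabilizer for the $\SL(2,\mathbb{Z})$-action, are sound.
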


From a square-tiled surface, one can obtain two multi-curves that consist of horizontal curves $\gamma_1,\dots,\gamma_n$ and vertical ones $\lambda_1,\dots,\lambda_m$. In fact the surface $S$ is decomposed in two different ways via vertical and horizontal cylinders and the curves $\gamma_1,\dots,\gamma_n$ (resp. $\lambda_1,\dots,\lambda_m$) are exactly the generators of the fundamental groups of the horizontal cylinders (resp. vertical cylinders). 

The converse is also true, let $\gamma=\{\gamma_1,\dots,\gamma_n \}$ and $\lambda= \{\lambda_1,\dots,\lambda_m\}$
be two filling multi-curves on an orientable surface $S$ that is their complementary set is a union of disks. Then we can construct a square-tiled surface by simply considering a square on each intersection between the curves centered at the intersection point, and the gluing is deduced according to the combinatorial data of $\gamma$ and $\lambda$. We conclude:

\begin{prop}
On orientable surfaces, the set of two filling multi-curves is in bijection with the set of square-tiled surfaces, up to homeomorphisms.    
\end{prop}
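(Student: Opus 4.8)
The plan is to verify that the two constructions recalled above are mutually inverse once both sides are taken up to homeomorphism. Write $\Phi$ for the map sending a square-tiled surface to the pair of core multi-curves of its horizontal and vertical cylinder decompositions, and $\Psi$ for the map sending a pair of filling multi-curves to the square complex with one square centered at each intersection point, glued according to the combinatorial data. The common backbone of both sides is a $4$-valent embedded graph: given $(\gamma,\lambda)$ I would first isotope them into minimal position so that $G=\gamma\cup\lambda$ is an embedded graph in $S$ all of whose vertices are transverse double points, with the arcs of $\gamma$ and of $\lambda$ alternating in the cyclic order around each vertex. The hypothesis that $\gamma$ and $\lambda$ fill $S$ says precisely that every component of $S\setminus G$ is an open disk, so $G$ is the $1$-skeleton of a CW decomposition of $S$.

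The key observation is that $\Psi(\gamma,\lambda)$ is built on exactly this datum: its squares are indexed by the vertices of $G$, its edges are glued along the edges of $G$ (horizontal gluings along arcs of $\gamma$, vertical gluings along arcs of $\lambda$), and its cone points correspond to the complementary disks. A complementary region is bounded by a cyclic sequence of $2k$ arcs alternating between $\gamma$ and $\lambda$, hence by $2k$ corners, so it glues $2k$ square-corners of angle $\pi/2$ around a single point, producing cone angle $k\pi$. Thus filling is exactly what guarantees that $\Psi(\gamma,\lambda)$ is a genuine \emph{closed} square-tiled surface rather than an object with non-disk complementary regions.

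Conversely, for a square-tiled surface $T$ the curves $\Phi(T)$ are the cores of its cylinder decompositions; they are disjoint within each of the two systems, and they fill because the complement of the cores deformation retracts onto the union of the square-corners, i.e.\ onto the vertex set of $T$, so each complementary region is a disk around a cone point. Applying $\Psi$ to $\Phi(T)$ restores one square per intersection point with the same horizontal/vertical gluing pattern, recovering $T$; applying $\Phi$ to $\Psi(\gamma,\lambda)$ returns the cores of the cylinders, which are isotopic to the original $\gamma_i$ and $\lambda_j$. To make this precise I would encode $G$ together with its gluings as a pair of permutations (horizontal and vertical), and check that $\Phi$ and $\Psi$ descend to mutually inverse bijections between conjugacy classes of such permutation data and homeomorphism classes on each side.

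The main obstacle is the bookkeeping of the equivalence relations rather than any isolated hard estimate: I must verify that isotoping $(\gamma,\lambda)$, or replacing $T$ by a homeomorphic model, alters the combinatorial datum only by an allowed relabeling, and—more delicately—that the half-translation (as opposed to translation) gluings are forced purely by the combinatorics of $G$ and introduce no further choices, so that no orientation or co-orientation data is silently lost. A secondary point to handle carefully is the status of bigon complementary regions, which give cone angle $\pi$: I would either rule them out by insisting on minimal position, or else admit them as marked poles, so that $\Psi$ and $\Phi$ genuinely land in the same class of square-tiled surfaces.
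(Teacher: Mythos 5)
Your proposal follows essentially the same route as the paper, which simply describes the two constructions (cores of horizontal/vertical cylinders in one direction, one square per intersection point in the other) and asserts they are inverse; you supply the verification the paper leaves implicit, including the cone-angle count and the role of the filling hypothesis. The extra care about minimal position, bigons, and the permutation encoding of the equivalence relations is a genuine strengthening of the paper's sketch rather than a different method.
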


In the case where the intersections between the curves of $\gamma$ and $\lambda$ have the same sign with respect to the orientation of $S$, then the corresponding square-tiled surface has only translations as identifications between the edges of the squares, hence we get an origami i.e. a ramified cover of the torus with only one singular point at the basis (in this case the torus). To encode the origami we need two permutations: let $S_1,\dots, S_d$ be the squares forming the origami $S$ (Notice here that $d$ is the degree of the ramified cover). For $i \in \{1,\dots d\}$ denote by $a_i$ the edge to the left of the square $S_i$ and by $b_i$ the edge to the bottom of $S_i$. To determine the origami it is sufficient to decide which edge $a_{\sigma_{i}}$ is to the right to the square $S_i$ and which edge $b_{\sigma'_{i}}$ is to the top of the square $S_i$. Therefore one has: 
\begin{prop}
The set of pair of permutations that acts transitively on $\{1,\dots,d \}$, up to conjugation of the permutation group of $d$ elements, is in bijection with the set of connected origamis of degree $d$, up to homeomorphisms. 
\end{prop}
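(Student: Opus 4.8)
The plan is to exhibit explicit maps in both directions between the set of pairs of permutations $(\sigma,\sigma')\in\mathfrak{S}_d\times\mathfrak{S}_d$ generating a transitive subgroup and the set of connected origamis of degree $d$, and then to verify that they descend to mutually inverse bijections once one quotients by simultaneous conjugation on the left and by homeomorphism on the right.

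First I would set up the map from origamis to permutation pairs. Given a connected origami $S$ of degree $d$, fix a labeling $S_1,\dots,S_d$ of its squares and define $\sigma,\sigma'\in\mathfrak{S}_d$ exactly as in the discussion preceding the statement: $S_{\sigma(i)}$ is the square glued to the right of $S_i$ (its left edge $a_{\sigma(i)}$ being identified with the right edge of $S_i$) and $S_{\sigma'(i)}$ is the square glued on top of $S_i$. These are bijections because each edge is glued to exactly one other edge. The essential point is that crossing the four sides of a square corresponds to applying $\sigma^{\pm1}$ or $\sigma'^{\pm1}$ to its label; hence the set of squares reachable from $S_1$ by successively crossing edges is precisely the orbit of $1$ under $\langle\sigma,\sigma'\rangle$. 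Consequently $S$ is connected if and only if $\langle\sigma,\sigma'\rangle$ acts transitively on $\{1,\dots,d\}$, which matches the condition in the statement.

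For the reverse map I would start from a transitive pair $(\sigma,\sigma')$, take $d$ unit squares $S_1,\dots,S_d$, glue the right edge of $S_i$ to the left edge of $S_{\sigma(i)}$ by a horizontal unit translation and the top edge of $S_i$ to the bottom edge of $S_{\sigma'(i)}$ by a vertical unit translation. The quotient is a closed translation surface tiled by $d$ squares whose vertices are all identified to the cone points lying over the single marked point of the base torus, i.e. an origami of degree $d$; transitivity guarantees connectedness. On labeled objects this is visibly inverse to the previous construction.

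It remains to match the two equivalence relations, which is the heart of the argument. Relabeling the squares of a fixed origami by $\tau\in\mathfrak{S}_d$ replaces $(\sigma,\sigma')$ by $(\tau\sigma\tau^{-1},\tau\sigma'\tau^{-1})$, so the simultaneous conjugacy class is a well-defined invariant of the unlabeled origami, and conversely conjugate pairs build origamis that differ only by a relabeling. The main obstacle is the converse comparison: one must show that every homeomorphism in the admissible class is induced by such a relabeling. For this I would argue that an isomorphism of origamis preserves the horizontal and vertical foliations and carries square cells to square cells, hence restricts to a bijection of the set of squares and induces a permutation $\tau$ of the labels; compatibility of this bijection with the two gluings then forces exactly the conjugation relation above. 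The delicate point is therefore the precise notion of admissible homeomorphism — translation (or affine) isomorphisms respecting the square decomposition — together with the verification that any such map descends to a bijection of squares. Conceptually this is the classical covering-space dictionary: a connected origami of degree $d$ is a connected $d$-fold cover of the once-punctured torus, whose fundamental group is free of rank two, and such covers up to isomorphism are classified by conjugacy classes of transitive actions on $d$ points, i.e. by pairs of permutations up to simultaneous conjugation. Once the square-to-square bijection is secured, the two constructions descend to the desired bijection.
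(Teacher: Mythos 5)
Your proposal is correct and follows the same route the paper takes implicitly: the paper's "proof" is just the preceding paragraph defining $\sigma,\sigma'$ from the right/top gluings, and you expand exactly that encoding, adding the (omitted but standard) verifications that connectedness corresponds to transitivity and that relabeling squares corresponds to simultaneous conjugation. You are also right to flag that "up to homeomorphisms" must be read as isomorphism of square-tiled covers of the once-punctured torus for the bijection to hold, which is the covering-space dictionary you invoke.
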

For instance, consider $d=4$, $\sigma=(1,2,3,4)$ and $\sigma'=(1,3,2,4)$. In this example, we get a genus $2$ surface where the corresponding two multi-curves $\gamma$ and $\lambda$ are exactly two curves, the vertical curve $\gamma$ and the horizontal curve $\lambda$, as illustrated in Figure \ref{2 curves}.  

\begin{figure}[htp]
    \centering
    \includegraphics[width=7cm]{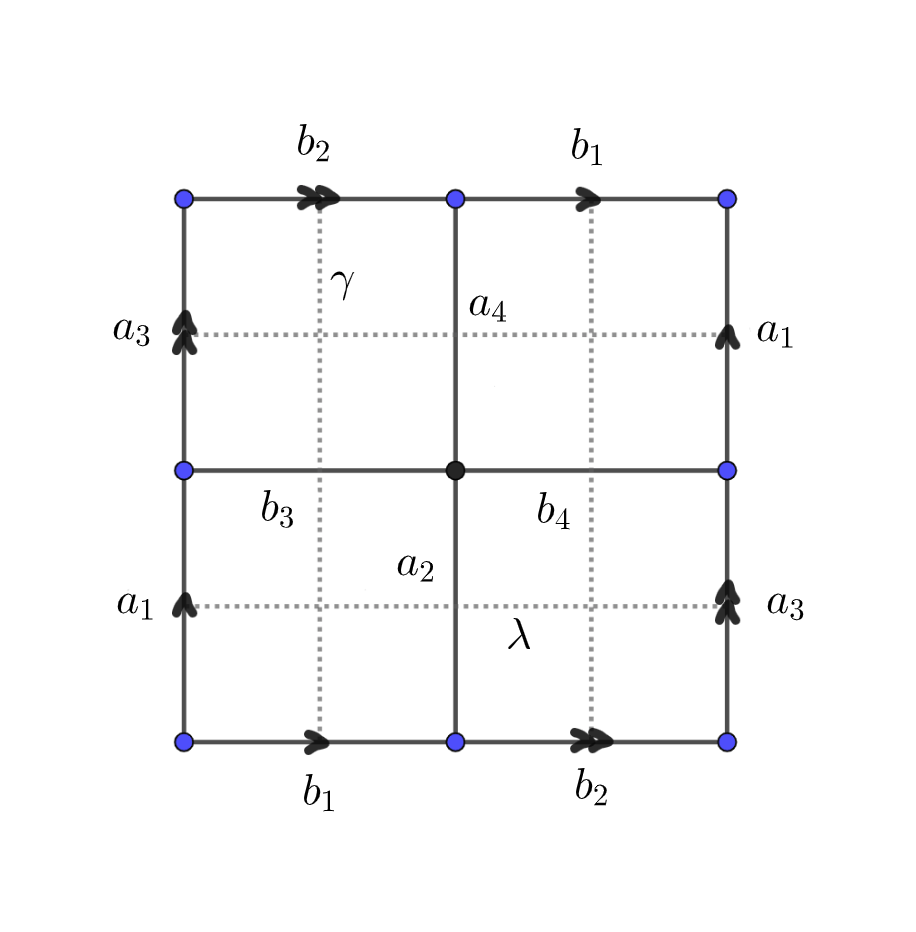}
   \caption{Two filling curves in $S_2$}
    
    \label{2 curves}
\end{figure}

The vertical curve $\gamma$ (resp. $\lambda$) generates the fundamental group of the vertical (resp. horizontal) cylinder made by the four squares. Viewing the surface in this way allows one to see that the matrix $\begin{pmatrix}
1 & 0\\
4 & 1 
\end{pmatrix}$  (resp. $\begin{pmatrix}
1 & 4\\
0 & 1 
\end{pmatrix}$) realizes an affine transformation on the surface that is in the isotopy class of the Dehn-twist along the curve $\gamma$ (resp. $\lambda$). In particular, the vertices of the square-tiled surface are fixed point under the action of the affine transformations corresponding to Dehn-twists along the horizontal and vertical curves.

\section{ $\SU(2)$-character varieties and Goldman's flow}\label{Goldman Flow}

The simplest actions on $\Hom(\pi_1(S),\SU(2))$ are those defined via a Dehn-twist along a simple closed curve. If a curve $\alpha$ is non-separating, then its action on $\Hom(\pi_1(S),\SU(2))$ can be expressed in a simple way using the HNN extension by writing: $$\pi_{1}(S) =  \{\pi_{1}(S \setminus \alpha) ,  \beta  \mid   \beta \alpha^{+} \beta^{-1}=\alpha^{-} \}$$ 

With $\alpha^{+}$ and $\alpha^{-}$ being the resulting boundaries loops after cutting along $\alpha$. The group $\pi_1(S \setminus \alpha)$ can be seen as the group that consists of curves that do not intersect $\alpha$. The loop $\beta$ is a loop that intersects $\alpha$ in one point. If we consider the base point of the fundamental group of $S$ to be on the boundary of a tubular neighborhood of $\alpha$,  then, on the $\Hom$ level, we can write: 

\begin{equation*}
\tau_{\alpha}^{*}(\rho)(\gamma)=
    \begin{cases}
        \rho(\gamma) & \text{if } \gamma \in \pi_{1}(S \setminus \alpha)\\
        \rho(\gamma) . \rho(\alpha^{+}) & \text{if } \gamma =\beta 
    \end{cases}
\end{equation*}

If we denote the one-parameter subgroup of $\SU(2)$ of velocity $1$ that passes through $\rho(\alpha^{+})$ (choosing the shortest path between $Id$ and $\rho(\alpha^{+})$) by $t \mapsto \xi_{\rho(\alpha^{+})}(t)$), with respect to the bi-invariant metric on $\SU(2)$, then we have a $2\pi$-periodic flow  $\Xi_{\alpha}$ of $\mathbb{S}^{1}$ on $\Hom(\pi_1(S),\SU(2))$:  

\begin{equation*}
\Xi_{\alpha}^{t}(\rho)(\gamma)=
    \begin{cases}
        \rho(\gamma) & \text{if } \gamma \in \pi_{1}(S \setminus \alpha)\\
        \rho(\gamma) . \xi_{\rho(\alpha^{+})}(t) & \text{if } \gamma =\beta 
    \end{cases}
\end{equation*}
It is a well-defined flow on the character variety except when $\tr(\rho(\alpha^{+}))=\pm 2$. We restate here a version of Theorem 4.3 in \cite{G2} in the case where $\alpha$ is a non-separating curve:

\begin{prop}
 The Hamiltonian flow of the function $\rho \mapsto \tr(\rho(\alpha))$ with respect to the symplectic form is a reparametrization of the previous flow $\Xi_{\alpha}$.   
\end{prop}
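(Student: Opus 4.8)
The plan is to follow Goldman's cohomological computation of Hamiltonian flows of invariant trace functions. First I would recall the identification of the tangent space to the character variety at $[\rho]$ with the group cohomology $H^1(\pi_1(S),\su(2)_{\mathrm{Ad}\rho})$, where $\su(2)$ carries the $\mathrm{Ad}\rho$-action, and write Goldman's symplectic form as the cup product paired with an $\mathrm{Ad}$-invariant inner product $B$ on $\su(2)$:
$$\omega([u],[v]) = \langle B(u\smile v),[S]\rangle,$$
the evaluation being on the fundamental class $[S]\in H_2(S)$. Computing the Hamiltonian vector field of $f_\alpha(\rho)=\tr(\rho(\alpha))$ then amounts to finding the cocycle $u_\alpha\in Z^1(\pi_1(S),\su(2)_{\mathrm{Ad}\rho})$ with $\omega(u_\alpha,\cdot)=df_\alpha$.

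Second, I would compute $df_\alpha$. Introduce the variation function $F\colon\SU(2)\to\su(2)$ characterized by $B(F(A),X)=\tfrac{d}{dt}\big|_{0}\tr\!\big(A\exp(tX)\big)$. Since $\tr$ is conjugation-invariant, $F$ is $\mathrm{Ad}$-equivariant and $F(A)$ lies in the centralizer of $A$; concretely, writing $A=\exp(\theta\,\hat n)$ with $\hat n\in\su(2)$ a unit vector, a short computation (only the $\hat n$-component of $X$ contributes, since the perpendicular directions pair trivially with $\hat n$ under the trace form) gives $F(A)=\phi(\tr A)\,\hat n$ for an explicit scalar $\phi$; for the trace of the standard representation, $\tr A=2\cos\theta$ and $\phi=-2\sin\theta$ up to the normalization of $B$. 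The key point is that $F(A)$ is \emph{parallel} to the velocity-one generator $\hat n$ of the one-parameter subgroup through $A$, the constant of proportionality depending only on $\tr A$.

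Third --- and this is the heart of the matter --- I would identify the Hamiltonian cocycle. Using the HNN description $\pi_1(S)=\{\pi_1(S\setminus\alpha)\cup\beta\mid \beta\alpha^{+}\beta^{-1}=\alpha^{-}\}$ together with Poincaré duality, the twist cocycle $u_\alpha$ that vanishes on $\pi_1(S\setminus\alpha)$ and takes the centralizer value $F(\rho(\alpha^{+}))$ on the stable letter $\beta$ should satisfy $\omega(u_\alpha,\cdot)=df_\alpha$. This is exactly Goldman's variation-of-trace computation, carried out by evaluating the cup product against an arbitrary class $[v]$ and reducing it, via the cutting along $\alpha$ and the intersection pairing dual to $[\alpha]$, to the identity $B(F(\rho(\alpha^{+})),v(\alpha))=df_\alpha(v)$. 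I expect this cup-product and Poincaré-duality bookkeeping --- keeping track of the restriction to $S\setminus\alpha$, the role of the stable letter $\beta$, and the sign and normalization conventions --- to be the main obstacle.

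Finally, I would integrate. The twist cocycle $u_\alpha$ exponentiates to the flow that fixes $\rho$ on $\pi_1(S\setminus\alpha)$ and right-multiplies $\rho(\beta)$ by $\exp\!\big(t\,F(\rho(\alpha^{+}))\big)$. Since $F(\rho(\alpha^{+}))=\phi(\tr\rho(\alpha))\,\hat n$ is a scalar multiple of the unit generator defining $\xi_{\rho(\alpha^{+})}$, this flow equals $\exp\!\big(t\phi\,\hat n\big)=\xi_{\rho(\alpha^{+})}(\phi\, t)$ on $\beta$, i.e.\ it is precisely $\Xi_\alpha$ run at speed $\phi(\tr\rho(\alpha))$. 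Because both flows fix $\rho(\alpha^{+})$ and hence preserve $\tr(\rho(\alpha))$, the factor $\phi(\tr\rho(\alpha))$ is constant along each orbit, so the time-change is a genuine reparametrization. This is well-defined away from $\tr(\rho(\alpha))=\pm2$, where $\hat n$ degenerates and the flow is undefined, matching the excluded locus noted before the statement.
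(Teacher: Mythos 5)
Your outline is correct and is essentially the paper's own justification: the paper offers no independent proof of this proposition, presenting it as a restatement of Theorem 4.3 of \cite{G2}, and your cohomological computation --- the cup-product symplectic form on $H^1(\pi_1(S),\su(2)_{\mathrm{Ad}\rho})$, the variation function $F(A)=\phi(\tr A)\,\hat n$ parallel to the velocity-one generator with scalar depending only on $\tr\rho(\alpha)$ (hence constant along orbits), and the twist cocycle supported on the stable letter $\beta$ --- is precisely Goldman's proof of that theorem. The one step you leave as "the main obstacle," namely that the twist cocycle is dual to $df_\alpha$ under the cup-product pairing, is exactly the content of Goldman's duality computation in the cited reference, so your argument matches the intended one and contains no gap beyond the bookkeeping you already flag.
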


Using the HNN extension, the restriction to $\pi_1(S \setminus \alpha)$ defines a projection $P_{\alpha} : \Hom(\pi_1(S),\SU(2)) \mapsto \Hom(\pi_1(S \setminus \alpha),\SU(2))$.  Due to the above description, one has:  

\begin{prop}\label{Pro4}
The projection $P_{\alpha}$ defines an $\mathbb{S}^1$-bundle on the locus $\{ \rho \in \Hom(\pi_1(S),\SU(2)) \mid \rho(\alpha^{+}) \neq \pm Id)$ over its image. Moreover, its fibers coincide with the orbits of $\Xi_{\alpha}$.      
\end{prop}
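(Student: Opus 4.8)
The plan is to read off the fibers of $P_{\alpha}$ directly from the HNN presentation and then to identify the orbits of $\Xi_{\alpha}$ with these fibers. First I would fix a representation $\rho$ and set $\psi := P_{\alpha}(\rho) = \rho|_{\pi_1(S\setminus\alpha)}$. By the HNN presentation $\pi_1(S)=\{\pi_1(S\setminus\alpha)\cup\beta \mid \beta\alpha^{+}\beta^{-1}=\alpha^{-}\}$, any representation $\rho'$ restricting to $\psi$ is determined by the single element $B:=\rho'(\beta)\in\SU(2)$, subject only to the relation $B\,\psi(\alpha^{+})\,B^{-1}=\psi(\alpha^{-})$. Hence the fiber $P_{\alpha}^{-1}(\psi)$ is in bijection with $\{B\in\SU(2): B\psi(\alpha^{+})B^{-1}=\psi(\alpha^{-})\}$. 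Since $B_0:=\rho(\beta)$ is one solution, every other solution $B$ satisfies $B_0^{-1}B\in Z(\psi(\alpha^{+}))$, so the fiber is exactly the left coset $B_0\,Z(\psi(\alpha^{+}))$ of the centralizer of $\psi(\alpha^{+})$.

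Next I would compute this centralizer in $\SU(2)$: it equals the whole group when $\psi(\alpha^{+})=\pm Id$ and a maximal torus $\cong\mathbb{S}^{1}$ otherwise, since a regular element of $\SU(2)$ lies in a unique maximal torus equal to its centralizer. Thus, away from the locus $\psi(\alpha^{+})=\pm Id$, each fiber is a single coset of a circle, i.e.\ homeomorphic to $\mathbb{S}^{1}$. To match this with the flow, I would observe that the one-parameter subgroup $t\mapsto\xi_{\psi(\alpha^{+})}(t)$ generates precisely that maximal torus $Z(\psi(\alpha^{+}))$. Since $\Xi_{\alpha}^{t}$ fixes the restriction and sends $B\mapsto B\,\xi_{\psi(\alpha^{+})}(t)$, and since $\xi(t)$ centralizes $\psi(\alpha^{+})$, the flow stays inside the fiber; as $t$ runs through one period, $B\,\xi(t)$ sweeps out the entire coset $B\,Z(\psi(\alpha^{+}))$. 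Therefore the orbit of $\Xi_{\alpha}$ through $\rho$ coincides with $P_{\alpha}^{-1}(\psi)$, proving the second assertion.

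For the bundle structure I would argue that on the open set $\{\psi(\alpha^{+})\neq\pm Id\}$ the generating direction $X\in\su(2)$ of the shortest geodesic from $Id$ to $\psi(\alpha^{+})$ is uniquely defined and depends smoothly on $\rho$, so $\Xi_{\alpha}$ is a smooth $\mathbb{S}^{1}$-action of constant period. This action is free, since $\Xi_{\alpha}^{t}(\rho)=\rho$ forces $\xi(t)=Id$. A smooth free action of the compact group $\mathbb{S}^{1}$ yields a principal $\mathbb{S}^{1}$-bundle over its orbit space, which is the image of $P_{\alpha}$ restricted to this open set; this gives the asserted $\mathbb{S}^{1}$-bundle almost everywhere.

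I expect the main obstacle to be the behavior along the excluded locus $\psi(\alpha^{+})=\pm Id$. There the centralizer jumps to all of $\SU(2)$, the geodesic direction $X$ degenerates (there is no canonical shortest path, and for $\psi(\alpha^{+})=Id$ no direction at all), and both the fiber dimension and the definition of the flow break down. The careful points will be to confirm that this locus is genuinely of lower dimension, so that ``almost everywhere'' is meaningful for the invariant measure, to ensure the circle action has constant period on its complement so that the quotient is an honest principal bundle rather than merely a foliation by circles, and to address possible non-smoothness of the image of $P_{\alpha}$ where $\psi$ fails to be a submersion point.
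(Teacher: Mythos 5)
Your proposal is correct and follows essentially the same route as the paper: the HNN presentation reduces the fiber of $P_{\alpha}$ to the set of solutions $B=\rho(\beta)$ of $B\,\rho(\alpha^{+})\,B^{-1}=\rho(\alpha^{-})$, which is a circle in $\SU(2)$ (the paper's ``big circle $S_{\alpha}$'', identified in your write-up as the centralizer coset $B_0\,Z(\rho(\alpha^{+}))$), and this circle is swept out by the flow $\Xi_{\alpha}$. Your version merely fills in details the paper leaves implicit (why the circle is a centralizer coset, why the one-parameter subgroup generates it, and the freeness of the $\mathbb{S}^1$-action), so no substantive divergence.
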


\begin{proof}
We notice first that the projection $P_{\alpha}$ is invariant under the flow $\Xi_{\alpha}$. Now we need to prove that the orbits of $\Xi_{\alpha}$ are exactly the fibers of $P_{\alpha}$.

Let $\rho_{0}$ be a representation in $\Hom(\pi_1(S \setminus \alpha),\SU(2))$, We observe that the image of $P_{\alpha}$ is the algebraic subset defined by the polynomial function $\rho \mapsto \tr(\rho(\alpha^{+}))- \tr(\rho(\alpha^{-}))$. Therefore we assume that $\tr(\rho_{0}(\alpha^{+}))=\tr(\rho_{0}(\alpha^{-}))$. In order to find an extension of $\rho_{0}$ in $\Hom(\pi_1(S),\SU(2))$, it is sufficient to determine $\rho(\beta)$. The condition $\beta.\alpha^{+}.\beta^{-1}=\alpha^{-}$ implies that $\rho(\beta)$ lies in a big circle $S_{\alpha} \subset \SU(2)$ that depends only on $\rho_{0}(\alpha^{+})$ and $\rho_{0}(\alpha^{-})$. 

\end{proof}

The situation is not very different for a non-separating multi-curve $\alpha = \{\alpha_1,\dots,\alpha_n \}$. In fact the flows $\Xi_{\alpha_1},\dots,\Xi_{\alpha_n}$ commutes, therefore we have an action of an n-dimensional torus $\mathbb{T}^n$ defined almost everywhere on the representation variety. 

Similarly, we denote by $P_{\alpha}$ the restriction to $\pi_1(S \setminus \alpha)$. Let us now denote by $M_{\alpha}$ the group generated by $\tau_{\alpha_{1}},\dots,\tau_{\alpha_{n}}$ and consider a homeomorphism $f=\tau_{\alpha_{k_1}}^{i_1} \circ \dots \circ \tau_{\alpha_{k_n}}^{i_n}$ in $M_{\alpha}$, for some non-zero integers $i_1,\dots,i_n$, then one has:   

\begin{prop}\label{P6}
   The ergodic components of $f$, the ergodic components of $M_{\alpha}$, and the fibers of $P_{\alpha}$ are almost everywhere equal. 
\end{prop}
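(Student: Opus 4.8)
The plan is to reduce everything to the dynamics of the commuting twist flows on a single fiber of $P_\alpha$ and then invoke the classical ergodicity criterion for translations of a torus. First I would observe that, exactly as in Proposition \ref{Pro4}, the projection $P_\alpha$ is invariant under each flow $\Xi_{\alpha_i}$, hence under each Dehn twist $\tau_{\alpha_i}$, and therefore under $f$ and under the whole group $M_\alpha$. Consequently every ergodic component of $f$ and of $M_\alpha$ is contained in a fiber of $P_\alpha$: all three partitions in the statement refine the fiber partition, and it remains only to show that, for almost every fiber, both $f$ and $M_\alpha$ act ergodically on that fiber. By the multi-curve analogue of Proposition \ref{Pro4}, a generic fiber over $\rho_0 \in \Hom(\pi_1(S\setminus\alpha),\SU(2))$ is an orbit of the almost-everywhere $\mathbb{T}^n$-action, namely a product of big circles $\prod_i S_{\alpha_i} \cong \mathbb{T}^n$, and the Goldman measure disintegrates over the base with conditional measures equal to the Haar measure on these tori; this is where I would use that the twist flows are the Hamiltonian flows of the commuting trace functions $\rho \mapsto \tr(\rho(\alpha_i))$.

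Next I would compute the restricted actions explicitly. On the fiber over $\rho_0$, use the twist parameters $(t_1,\dots,t_n)\in\mathbb{T}^n$ as coordinates, where $t_i$ records the position of $\rho(\beta_i)$ along the circle $S_{\alpha_i}$. Since $\tau_{\alpha_i}$ modifies only the generator $\beta_i$, replacing $\rho(\beta_i)$ by $\rho(\beta_i)\rho(\alpha_i^+)$, and since right multiplication by $\rho_0(\alpha_i^+)$ preserves $S_{\alpha_i}$ and rotates it, $\tau_{\alpha_i}$ acts as the translation $t_i\mapsto t_i+\theta_i(\rho_0)$ in the $i$-th factor and fixes the remaining coordinates, where $\theta_i(\rho_0)$ is the shortest-path length from $Id$ to $\rho_0(\alpha_i^+)$. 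This quantity is constant along the fiber because $\rho_0(\alpha_i^+)$ is determined by $P_\alpha(\rho)=\rho_0$. Hence $M_\alpha$ acts on the fiber as the group of translations generated by the $n$ axis-aligned vectors $\theta_i(\rho_0)\,e_i$, while $f=\tau_{\alpha_{k_1}}^{i_1}\circ\cdots\circ\tau_{\alpha_{k_n}}^{i_n}$ acts as the single translation by the vector $(i_1\theta_1(\rho_0),\dots,i_n\theta_n(\rho_0))$.

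Then I would apply the standard fact that a group of translations of $\mathbb{T}^n$ is ergodic for Haar measure if and only if its closure is all of $\mathbb{T}^n$. For $M_\alpha$ this amounts to requiring each $\theta_i(\rho_0)$ to be an irrational multiple of the period of $S_{\alpha_i}$, while for the single map $f$ the criterion (Weyl) is that $i_1\theta_1(\rho_0),\dots,i_n\theta_n(\rho_0)$ together with the period be rationally independent. It then remains to verify that both conditions fail only on a set of fibers of measure zero. Each $\theta_i$ is a non-constant real-analytic function of $\rho_0$ through $\tr(\rho_0(\alpha_i^+))$, so each locus where $\theta_i$ is rational is a countable union of proper real-analytic subsets and hence has measure zero; the joint rational-dependence locus relevant to $f$ is similarly a measure-zero union of hypersurfaces, using that the $\theta_i$ depend on the functionally independent data $\rho_0(\alpha_i^+)$. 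Combining these with the disintegration of the measure, a.e. fiber is one on which both $f$ and $M_\alpha$ act ergodically, so their ergodic components coincide with the fibers of $P_\alpha$ almost everywhere.

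The main obstacle I anticipate lies not in the torus dynamics, which is classical, but in the two measure-theoretic inputs. The first is the disintegration of the Goldman measure into fiberwise Haar measures, for which I would lean on the Hamiltonian/integrable description of the commuting twist flows so that the invariant measure splits as a base measure times Haar on the generic $\mathbb{T}^n$-orbits. The second is the genericity claim that the rational-dependence loci are negligible: here I would need to confirm that the map $\rho_0\mapsto(\tr(\rho_0(\alpha_1^+)),\dots,\tr(\rho_0(\alpha_n^+)))$ is a submersion on a full-measure open set, so that a Fubini argument excludes joint rational relations; the non-generic fibers, where the $\mathbb{T}^n$-orbit degenerates (for instance when some $\rho_0(\alpha_i^+)=\pm Id$), form an exceptional set of measure zero that can be discarded throughout.
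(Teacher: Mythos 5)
Your proposal is correct and follows essentially the same route as the paper: identify the generic fibers of $P_\alpha$ with $\mathbb{T}^n$-orbits of the commuting twist flows, observe that $f$ and the generators of $M_\alpha$ act on each fiber as translations by the angles $\theta(\rho(\alpha_i^+))$, and conclude by genericity of the irrationality conditions. You simply make explicit two steps the paper leaves implicit (the disintegration of the measure into fiberwise Haar measures and the Weyl criterion together with the submersion argument showing the rational-dependence loci are null), which is a faithful elaboration rather than a different approach.
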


\begin{proof}

The proof relies on the relation between the action of a single Dehn-twist $\tau_{\gamma}$ and the flow $\Xi_{\gamma}$, one has the following  (See Section 2 in \cite{GX1} for more details):   
$$  \tau^{*}_{\gamma}(\rho) =  \Xi_{\gamma}^{\theta(\rho(\gamma^{+}))}(\rho) $$ 

Where $\theta(X)$ is the angle of the matrix $X$, more precisely, $\theta(X)=\arccos(\frac{\tr(X)}{2})$.
Since the curves do not intersect, $f$ preserves the functions $\rho \mapsto \theta(\rho(\alpha_{i}^{+}))$, for all $i \in \{1,\dots,n\}$. Hence we deduce that for a generic representation $\rho$, the automorphism $f$ acts by the same translation inside the $\Xi_{\alpha}$-orbit of $\rho$ which is generically diffeomorphic to a torus $\mathbb{T}^{n}$. The fact that $\{\theta(\rho(\alpha_{i}^{+})\}_{i=1}^{i=n}$ can be chosen freely yields the first part. The proof of the second part i.e. the relation between $P_{\alpha}$ and $M_{\alpha}$ is the same as the proof of Proposition \ref{Pro4}.

\end{proof}

The last proposition says that one can not distinguish measurably between
the action of such an $f \in M_{\gamma}$ and the action of the whole group $M_{\gamma}$.

\begin{remark}
   If $\alpha = \{ \alpha_i \}_{i=1}^{i=n}$ is a non-separating multi-curve then the image of $P_{\alpha}$ is the algebraic variety defined to be the zero locus in $\Hom(\pi_1(S\setminus \alpha),\SU(2))$ of the polynomials $\rho \mapsto \tr(\rho(\alpha_{i}^{+}))-\tr(\rho(\alpha_{i}^{-}))$, for $i \in \{1,\dots n \}$.

\end{remark}

\section{Foliations on the intersection of quadrics}\label{Foliations}

To analyze the action of two multi-twists, let us reflect on a larger family of algebraic foliations. 
On the affine space $\mathbb{R}^n \times \mathbb{R}^n$, let  $\{f_i\}_{i=1}^{i=m}$ be a collection of bilinear forms on  $\mathbb{R}^n \times \mathbb{R}^n$ and denote by $f$ the bilinear map  $(f_1,\dots,f_m)$. 
Set $V$ to be the zero locus of the map $f$. Over the algebraic variety $V$ we derive two natural foliations:

For $A \in \mathbb{R}^n$ and $B \in \mathbb{R}^n$, let $\mathcal{A}$ and $\mathcal{B}$ be the intersection of $V$ with the affine subspaces $\{ A\} \times \mathbb{R}^n$ and $\mathbb{R}^n \times \{ B\} $ i.e. the levels of the maps $X \mapsto f(A,X)$ and $X \mapsto f(X,B)$, respectively. Let $\mathcal{C}$ to be the equivalence relation generated by the two foliations.  
Let $P_A: V \mapsto \mathbb{R}^n$ and  $P_B: V \mapsto \mathbb{R}^n$ be the projection to the first and the second components, respectively.

At this point, one can ask whether the saturation $\mathcal{C}$ fills the variety $V$ or not. The answer depend on the ability to separate the two variables $A$ and $B$ from each other, in other words:

\begin{prop}
If $V$ admits a non-constant function that factors through $P_A$ and $P_B$ simultaneously, then the function is constant on the $\mathcal{C}$-classes. 
\end{prop}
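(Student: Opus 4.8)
The plan is to observe that the leaves of the two foliations are nothing but the fibres of the two projections, so that ``factoring through $P_A$ and $P_B$ simultaneously'' already encodes constancy along both foliations; the conclusion then follows by propagating this constancy along the chains defining $\mathcal{C}$.

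First I would rewrite the leaves in terms of the projections. Writing a point of $V$ as a pair $(x,y)$ with $f(x,y)=0$, we have $P_A(x,y)=x$ and $P_B(x,y)=y$. A leaf of the first foliation is $\mathcal{A}=V\cap(\{A\}\times\mathbb{R}^n)=\{(A,X):f(A,X)=0\}$, which is exactly the fibre $P_A^{-1}(A)$; symmetrically a leaf of the second foliation is $\mathcal{B}=V\cap(\mathbb{R}^n\times\{B\})=P_B^{-1}(B)$. Thus the first foliation is the partition of $V$ into fibres of $P_A$, and the second into fibres of $P_B$.

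Next, suppose $\phi$ is a non-constant function on $V$ factoring through both projections, say $\phi=g\circ P_A=h\circ P_B$. Since $P_A$ is constant (equal to $A$) on the fibre $P_A^{-1}(A)$, the function $g\circ P_A$, and hence $\phi$, is constant on each such fibre, i.e. on each leaf of the first foliation. The second factorization gives, in the same way, that $\phi$ is constant on each leaf of the second foliation. So $\phi$ is a common first integral of both foliations.

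Finally I would unwind the definition of the saturation $\mathcal{C}$: two points $p,q\in V$ are $\mathcal{C}$-equivalent precisely when there is a finite chain $p=p_0,p_1,\dots,p_k=q$ in $V$ in which each consecutive pair $p_j,p_{j+1}$ lies in a common leaf of one of the two foliations. By the previous paragraph $\phi(p_j)=\phi(p_{j+1})$ for every $j$, whence $\phi(p)=\phi(q)$; thus $\phi$ is constant on every $\mathcal{C}$-class. I do not expect a genuine obstacle here, since the statement is essentially formal: the only points needing care are to fix the precise meaning of simultaneous factorization (the existence of $g,h$ with $g\circ P_A=h\circ P_B$) and to record that $\mathcal{C}$ is generated by such chains. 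The non-constancy hypothesis plays no role in the implication itself; its purpose, in the intended application, is to ensure that $\phi$ separates distinct $\mathcal{C}$-classes, so that $\mathcal{C}$ does not fill $V$.
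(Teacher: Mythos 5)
Your proof is correct and is precisely the formal unwinding that the paper leaves implicit: the paper states this proposition without proof, treating it as immediate from the fact that the leaves of $\mathcal{A}$ and $\mathcal{B}$ are the fibres of $P_A$ and $P_B$ and that $\mathcal{C}$ is generated by these two partitions. Your observation that the non-constancy hypothesis plays no role in the implication itself, but only in the intended application, is also accurate.
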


\begin{ex}
  Let $V \subset \mathbb{R}^2 \times \mathbb{R}^2 $ be the quadric defined by the bilinear map $ f(a_1,a_2,b_1,b_2) = a_1b_1 + a_2b_2$. Then the function $\frac{a_1}{a_2} = - \frac{b_2}{b_1}$ is constant on the $\mathcal{C}$-classes.  
\end{ex}

\begin{ex}
 If we consider $\mathbb{R}^3$ instead of $\mathbb{R}^2$ that is the quadric $V \subset \mathbb{R}^3 \times \mathbb{R}^3 $ defined by $f(a_1,a_2,a_3,b_1,b_2,b_3) = a_1b_1 + a_2b_2+a_3b_3$, then no such function exists. We simplify the task by considering the zero locus of $a_1b_1+a_2b_2 = -1$ on the product of the projective spaces of the two components. Therefore the new foliations $\mathcal{A}$ and $\mathcal{B}$ are the integral curves of the vector fields $(0,0,b_2,-b_1)$ and $(a_2,-a_1,0,0)$, respectively. 
\end{ex}

\subsection*{On the $\SU(2)$-representation variety}

Let $\gamma$ and $\lambda$ be two filling multi-twists with one disk as complementary set and $n$ positive intersections in total (intersections having the same sign with respect to the orientation of $S$). The dynamics of the group generated by $M_\gamma$ and $M_{\lambda}$ on the representation variety $\Hom(\pi_1(S),\SU(2))$ ties into the previous discussion.  Let $S$ be the origami associated to $\gamma$ and $\lambda$. Let $\{ a_1,\dots ,a_n \}$ and  $\{ b_1,\dots ,b_n \}$ be the generators of $\pi_1(S \setminus \gamma)$ and $\pi_1(S \setminus \lambda)$, respectively.  

\begin{remark}
    Here we consider the vertex of the origami (conical singularity) to be the base point of the fundamental group. In this way, we can apply the results of Section \ref{Goldman Flow}.
\end{remark}

\begin{figure}[htp]
    \centering
    \includegraphics[width=6cm]{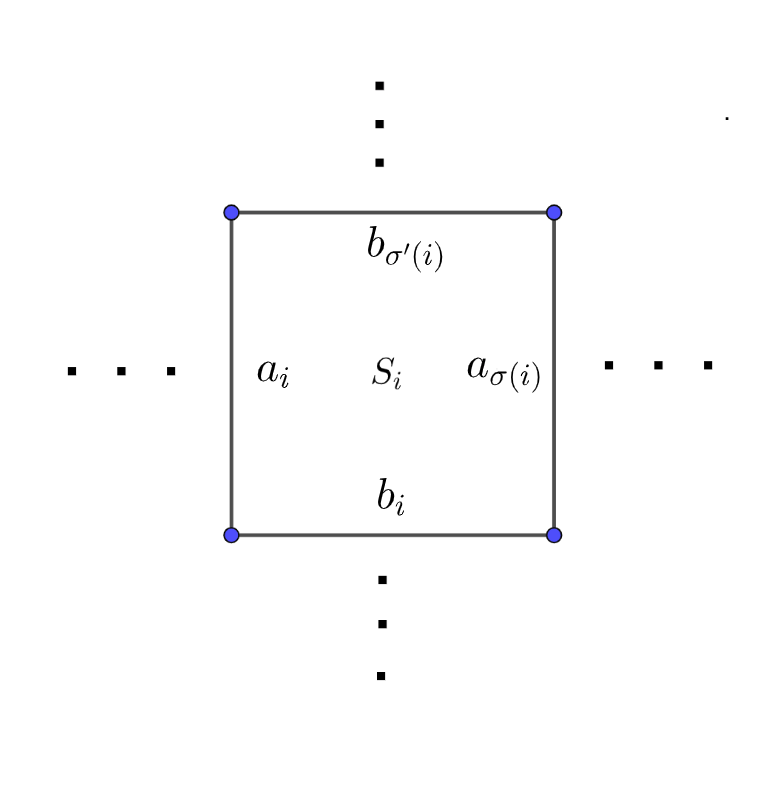}
   \caption{Square $S_i$ in the Square-tiled surface}
    
    \label{Square}
\end{figure}

The fundamental groups $\pi_1(S \setminus \gamma)$, $\pi_1(S \setminus \lambda)$ and $\pi_1(S)$ are generated
(with redundancy) by $\{ a_1,\dots, a_n \}$, $\{ b_1,\dots, b_n\}$ and  $\{ a_1,b_1,\dots,a_n,b_n \}$, respectively. 
The relations between the generators  $\{ a_1,\dots ,a_n \}$ and  $\{ b_1,\dots ,b_n \}$ are given by the square relations: $$a_i.b_{\sigma'(i)} = b_{i}.a_{\sigma(i)} $$ for  $i \in \{1,\dots n\}$. For a representation $\rho$ set $A_{i}=\rho(a_i)$ and $B_i=\rho(b_i)$, for $i \in \{1,\dots,n\}$. Using Proposition \ref{P6}, we get:  

\begin{prop}

    The map $\rho \mapsto (A_1,\dots ,A_n,B_1,\dots,B_n)$ is an embedding of $\Hom(\pi_1(S),\SU(2))$ into $\SU(2)^{2n}$ and the image $V$ is an algebraic variety defined by the square relations:  $$A_iB_{\sigma'(i)}-B_{i}A_{\sigma(i)}=0$$
    
    In addition, the foliations $\mathcal{A}$ and $\mathcal{B}$ defined on $V \subset \SU(2)^n \times \SU(2)^n$ are exactly the ergodic component of the subgroups $M_{\gamma}$ and $M_{\lambda}$.

\end{prop}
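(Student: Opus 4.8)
The statement splits into two independent assertions: first, that $\rho \mapsto (A_1,\dots,A_n,B_1,\dots,B_n)$ realizes $\Hom(\pi_1(S),\SU(2))$ as the subvariety $V \subset \SU(2)^{2n}$ cut out by the square relations; second, that under this identification the foliations $\mathcal{A}$ and $\mathcal{B}$ of Section \ref{Foliations} are the ergodic components of $M_\gamma$ and $M_\lambda$. The plan is to obtain the first from a presentation of $\pi_1(S)$ read off the origami, and the second as a direct application of Proposition \ref{P6}.

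For the first claim, the key point is that the hypothesis of a single complementary disk pins down the cell structure. The multi-curves $\gamma \cup \lambda$ give a CW decomposition of $S$ whose vertices are the $n$ intersection points, whose edges are the $2n$ arcs between them, and whose single $2$-cell is the complementary disk; this is Poincaré dual to the square decomposition of the origami, whose $n$ faces are the squares, whose $2n$ edges are the sides $a_i,b_i$, and whose vertices are the cone points. Dually, the one complementary disk forces exactly one cone point. I would therefore first record that the $1$-skeleton of the square decomposition is a wedge of $2n$ circles indexed by $a_1,\dots,a_n,b_1,\dots,b_n$, so that its $\pi_1$ is free on these loops. Attaching the $n$ squares as $2$-cells contributes exactly the boundary words $a_i b_{\sigma'(i)} a_{\sigma(i)}^{-1} b_i^{-1}$, and van Kampen yields $\pi_1(S) = \langle a_1,\dots,a_n,b_1,\dots,b_n \mid a_i b_{\sigma'(i)} = b_i a_{\sigma(i)},\ i=1,\dots,n\rangle$. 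Since a homomorphism out of a finitely presented group is the same datum as an assignment of its generators satisfying the relations, the map $\rho \mapsto (A_i,B_i)$ is injective because the $a_i,b_i$ generate, and its image is precisely the locus $\{A_i B_{\sigma'(i)} = B_i A_{\sigma(i)}\} = V$; being a morphism of affine real-algebraic varieties with inverse given by evaluation, it is the desired isomorphism onto $V$.

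For the second claim, I would identify the two projections. Restricting a representation to $\pi_1(S \setminus \gamma)$, generated by $a_1,\dots,a_n$, is under the above embedding exactly the first projection $P_A\colon V \to \SU(2)^n$, $(A_i,B_i)\mapsto(A_i)$; likewise restriction to $\pi_1(S\setminus\lambda)$, generated by $b_1,\dots,b_n$, is the second projection $P_B$. Thus $P_\gamma = P_A$ and $P_\lambda = P_B$ after the identification. Proposition \ref{P6} then applies verbatim to each non-separating multi-curve: the ergodic components of $M_\gamma$ coincide almost everywhere with the fibers of $P_\gamma = P_A$, which are by definition the leaves $V \cap (\{A\}\times\SU(2)^n)$ of $\mathcal{A}$; symmetrically the ergodic components of $M_\lambda$ are the leaves of $\mathcal{B}$. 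This gives the second assertion.

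The part I expect to require the most care is the bookkeeping in the first claim, namely verifying that the one-disk hypothesis genuinely yields a single cone point, so that no relations beyond the square relations survive: for an origami with several cone points a spanning-tree relation per extra vertex would appear, and the image of $\Hom$ would then be a proper subvariety of the square-relation locus rather than all of $V$. I would also keep track of the almost-everywhere qualifier inherited from Propositions \ref{Pro4} and \ref{P6}: the identification of fibers with foliation leaves, and of leaves with orbits of the torus flows $\Xi$, degenerates exactly on the locus where some $\rho(\alpha^{\pm}) = \pm \mathrm{Id}$, so the equality of the ergodic components with the leaves of $\mathcal{A}$ and $\mathcal{B}$ holds off this measure-zero set, which is all that is claimed.
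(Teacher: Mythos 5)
Your proposal is correct and follows the same route the paper takes: the paper offers no separate proof beyond the preceding paragraph (the $a_i,b_i$ generate $\pi_1(S)$ subject to the square relations read off the one-vertex square decomposition) and the phrase ``Using Proposition \ref{P6}, we get,'' and your argument is exactly that, with the van Kampen bookkeeping and the identification of $P_\gamma, P_\lambda$ with $P_A, P_B$ made explicit. Your added remarks about the single cone point being forced by the one-disk hypothesis and about the almost-everywhere qualifier inherited from Proposition \ref{P6} are accurate refinements of what the paper leaves implicit.
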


\section{Invariant functions}\label{Examples}

In what follows, we view $\SU(2)$ as the unit sphere of the quaternion numbers $\mathbb{H}$. 
Denote by $\mathbb{H}^{0}$ the subspace of imaginary vectors. We endow $\mathbb{H}$ with the canonical scalar product $(X,Y) \mapsto \tr(X.\overline{Y})$ which is bi-invariant, i.e. if $A,B \in \SU(2)$, then the linear map:   
$$ X \mapsto  A.X.B $$
is an isometry of $\mathbb{H}$. Conversely, every isometry of $\mathbb{H}$ can be expressed in this way.

Let us consider the following specific rectangle made with one vertex, two horizontal edges $b_I$, $b_J$, and two vertical edges $a_I$ and $a_J$ as illustrated in Figure \ref{Rectangle} below.   

\begin{figure}[htp]
    \centering
    \includegraphics[width=5cm]{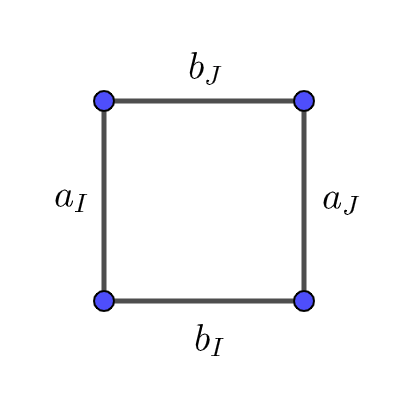}
    \caption{A rectangle in the square-tiled surface}
    \label{Rectangle}
\end{figure}
The rectangle relation writes $a_I.b_J =b_I.a_J$, so if $\rho$ is a representation, then we set $A_I=\rho(a_I)$, $A_J=\rho(a_J)$, $B_I=\rho(b_I)$ and $B_I=\rho(b_I)$. Now the rectangle relation writes:  $$ A_I.B_J=B_I.A_J$$
In general, it is not possible to separate the variables $(A_I,A_J)$ from $(B_I,B_J)$ i.e. to find a function that factors through $P_A$ and $P_B$ simultaneously. However, under some conditions the task becomes possible.

\begin{lemma}\label{One Square}
 If $a_I$ is conjugated to $a_J$ and $b_I$ is conjugated to $b_J$ then the two directions $[A_I-A_J]$ and $[B_I-B_J]$ defined on $P(\mathbb{H}^{0})$ are equal. 
\end{lemma}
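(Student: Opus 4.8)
The plan is to work entirely in the quaternion model and reduce the statement to the single vanishing $(A_I-A_J)\times(B_I-B_J)=0$ in $\mathbb{H}^0\cong\R^3$, since two nonzero imaginary quaternions determine the same point of $P(\mathbb{H}^0)$ exactly when their cross product vanishes. First I would split each unit quaternion into its real and imaginary parts, writing $A_I=c+u$, $A_J=c+u'$, $B_I=d+v$, $B_J=d+v'$ with $u,u',v,v'\in\mathbb{H}^0$. Here the conjugacy hypotheses are used first, and only for the trivial reason that $A_I\sim A_J$ forces $\tr(A_I)=\tr(A_J)$, hence equal real parts $c$, so that $p:=A_I-A_J=u-u'$ and $q:=B_I-B_J=v-v'$ genuinely lie in $\mathbb{H}^0$. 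Using the product rule $xy=-\langle x,y\rangle+x\times y$ for imaginary quaternions, the rectangle relation $A_IB_J=B_IA_J$ splits into its scalar part $\langle u,v'\rangle=\langle v,u'\rangle$ and its imaginary part $d\,p-c\,q+u\times v'-v\times u'=0$.

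Next I would symmetrize by introducing the midpoints $U:=\tfrac12(u+u')$ and $V:=\tfrac12(v+v')$, so that $u=U+\tfrac12 p$, $u'=U-\tfrac12 p$, and similarly for $v,v'$. A short bilinear computation gives the clean identity $u\times v'-v\times u'=2\,U\times V-\tfrac12\,p\times q$, which turns the imaginary part of the relation into
\[
d\,p-c\,q+2\,U\times V-\tfrac12\,p\times q=0 .
\]
The scalar part rewrites as $\langle p,V\rangle=\langle U,q\rangle=:k$. At this stage I would bring in the conjugacy hypotheses a second time, now in an essential way: $|u|=|u'|$ and $|v|=|v'|$ are equivalent to the orthogonalities $\langle U,p\rangle=0$ and $\langle V,q\rangle=0$.

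The punchline is to pair the rewritten imaginary equation with the vector $p\times q$. The terms $d\,p$ and $c\,q$ are killed because $p\times q\perp p,q$, leaving $\tfrac12\lvert p\times q\rvert^2=2\langle U\times V,\,p\times q\rangle$. Lagrange's identity gives $\langle U\times V,p\times q\rangle=\langle U,p\rangle\langle V,q\rangle-\langle U,q\rangle\langle V,p\rangle$, and the two orthogonalities annihilate the first product while the scalar relation collapses the second to $-k^2$. Hence $\tfrac12\lvert p\times q\rvert^2=-2k^2\le 0$, which forces $p\times q=0$ (and incidentally $k=0$). Thus $A_I-A_J$ and $B_I-B_J$ are proportional, so $[A_I-A_J]=[B_I-B_J]$ in $P(\mathbb{H}^0)$ whenever both differences are nonzero, which is exactly the regime in which these projective directions are defined.

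The step I expect to be the real obstacle is recognizing that the rectangle relation \emph{by itself} is insufficient: it yields only the two displayed equations, which are consistent with $p\times q\ne 0$. The conjugacy conditions are what close the argument, and the mechanism is slightly delicate — they must enter twice (once to place the differences in $\mathbb{H}^0$, once as the orthogonalities $\langle U,p\rangle=\langle V,q\rangle=0$) so that the Lagrange term becomes precisely the \emph{negative} square $-k^2$, producing a nonnegative quantity equal to a nonpositive one. The only other point requiring care is the degenerate case $p=0$ or $q=0$ (i.e. $A_I=A_J$ or $B_I=B_J$), where one projective direction is undefined; the statement should be read as asserting equality of directions wherever both are defined.
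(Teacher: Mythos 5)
Your proof is correct, and it takes a genuinely different route from the paper. The paper argues structurally: it introduces the linear map $\phi(X)=A_I\,X\,A_J^{-1}-X$ on $\mathbb{H}$, observes that its kernel is the two-dimensional space of quaternions conjugating $A_J$ to $A_I$ (so the image is also two-dimensional), checks that the image is not contained in $\mathbb{H}^{0}$ (else $\phi(1)=A_IA_J^{-1}-1$ would be traceless, forcing $A_I=A_J$), and then notes that both $A_I-A_J=\phi(A_J)$ and $B_I-B_J=\phi(B_J)$ (the latter by the rectangle relation) are traceless elements of $Im(\phi)$, hence both lie on the line $Im(\phi)\cap\mathbb{H}^{0}$. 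That argument is shorter and exhibits the common direction as an intrinsic object attached to the pair $(A_I,A_J)$ alone. Your computation with the real/imaginary splitting, the midpoint symmetrization, and Lagrange's identity is more hands-on but equally valid: I checked the identity $u\times v'-v\times u'=2\,U\times V-\tfrac12\,p\times q$, the scalar relation $\langle p,V\rangle=\langle U,q\rangle$, the orthogonalities $\langle U,p\rangle=\langle V,q\rangle=0$ (which, note, already follow from equal real parts plus $|A_I|=|A_J|=1$, so conjugacy really enters only once), and the final positivity argument $\tfrac12\lvert p\times q\rvert^{2}=-2k^{2}$; all are correct. Your approach buys two things the paper's does not make explicit: the extra conclusion $k=0$, and a clean statement of the degenerate cases $A_I=A_J$ or $B_I=B_J$ where the projective directions are undefined — a caveat the paper glosses over with ``which does not hold in general.''
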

    
\begin{proof}
Consider the following linear map, which is a function that depends only on  $A_I$ and $A_J$ :  

\begin{align*}
    \phi\colon&\mathbb{H}\longrightarrow\mathbb{H}\\
    &X \mapsto A_I.X.A_J^{-1}-X
     \end{align*}
The fact that $A_I$ is conjugated to $A_J$ implies that the kernel of $\phi$ is of rank $2$ since non-zero elements in the kernel are those who conjugate $A_I$ to $A_J$. So the image of $\phi$, denoted $Im(\phi)$, is of rank $2$. 
Observe that $Im(\phi)$ cannot contain only traceless matrices; if $\tr \circ \phi$ vanishes then, in particular, $\phi(1)=A_I.A_J^{-1}-1$  would be traceless which would imply that $A_I=A_J$ which does not hold in general.  
From the rectangle relation, we deduce that $$B_I-B_J = A_I.B_J.A_J^{-1}-B_J$$
Therefore $B_I-B_J \in Im(\phi)$. Since $B_I$ and $B_J$ are conjugate then $$ B_I-B_J = Im(\phi) \cap \mathbb{H}^{0}$$ The conjugacy assumption of $A_I$ and $A_J$ together with the fact that $\phi(A_J) = A_I-A_J$ allow us to deduce that: $$ [A_I-A_J]=[B_I-B_J]$$
\end{proof}

\subsection{Examples on the representation variety of $S_2$}(Proof of Theorem \ref{Th2})

In what follows, $\Gamma$ will denote the group generated by $M_{\gamma}$ and $M_{\lambda}$.
Using the previous lemma, one can see that the following examples admit a function on $\Hom(\pi_1(S),\SU(2))$ that factors through $P_A$ and $P_B$, simultaneously.   

For instance, let us consider the square-tiled surface $S'$ below (Figure \ref{Surface S'}) made of three squares with only one vertex and hence $S'$ has genus 2. 
\begin{figure}[htp]
    \centering
    \includegraphics[width=5.5cm]{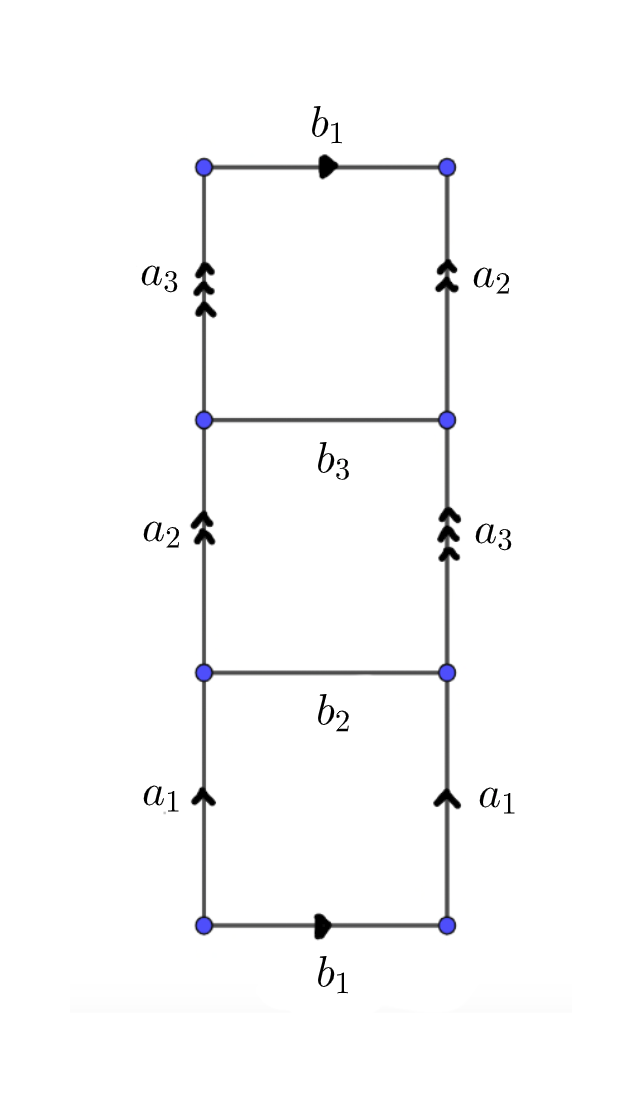}
    \caption{The surface $S'$}
    \label{Surface S'}
\end{figure}
We enumerate the squares of $S'$ from bottom to top. Using the relation of square $1$, i.e.  
$a_1.b_2=b_1.a_1$, we deduce that $b_1$ is conjugated to $b_2$. The curve  $a_2.a_3$ is always conjugated to $a_3.a_2$. By looking at the rectangle made by squares $2$ and $3$, using the relation $a_2.a_3.b_1=b_2.a_3.a_2$, we conclude that for a representation $\rho$, Lemma \ref{One Square}  applies and hence one has: 

$$[A_2.A_3-A_3.A_2] = [B_1 - B_2]$$
In other words, the direction orthogonal to $1$, $A_2$, and $A_3$ is exactly the direction of $B_1-B_2$. 

\begin{remark}
In this example, one can see that the affine transformations of $S'$ given by:  $$\begin{pmatrix}
1 & 0\\
3 & 1 
\end{pmatrix},   \begin{pmatrix}
1 & 2\\
0 & 1 
\end{pmatrix} $$  are elements of $\Gamma$.
\end{remark}

Lemma \ref{One Square} applies to different surfaces of genus $2$, For example, we can consider the famous square-tiled surface $S$ (Figure \ref{Famous Example} below). The group $\Gamma$ is generated by four Dehn-twists along two horizontal loops and two vertical ones as defined in Section $\ref{Square-tiled S}$. A finite index subgroup of the Veech group of $S$ is contained in $\Gamma$, this is a consequence of the fact that the two affine transformations of $S$:     
$$\begin{pmatrix}
1 & 2\\
0 & 1 
\end{pmatrix} ,  \begin{pmatrix}
1 & 0\\
2 & 1 
\end{pmatrix}$$ generate a finite index subgroup of $\SL(2,\mathbb{Z})$.

\begin{figure}[htp]
    \centering
    \includegraphics[width=6.2cm]{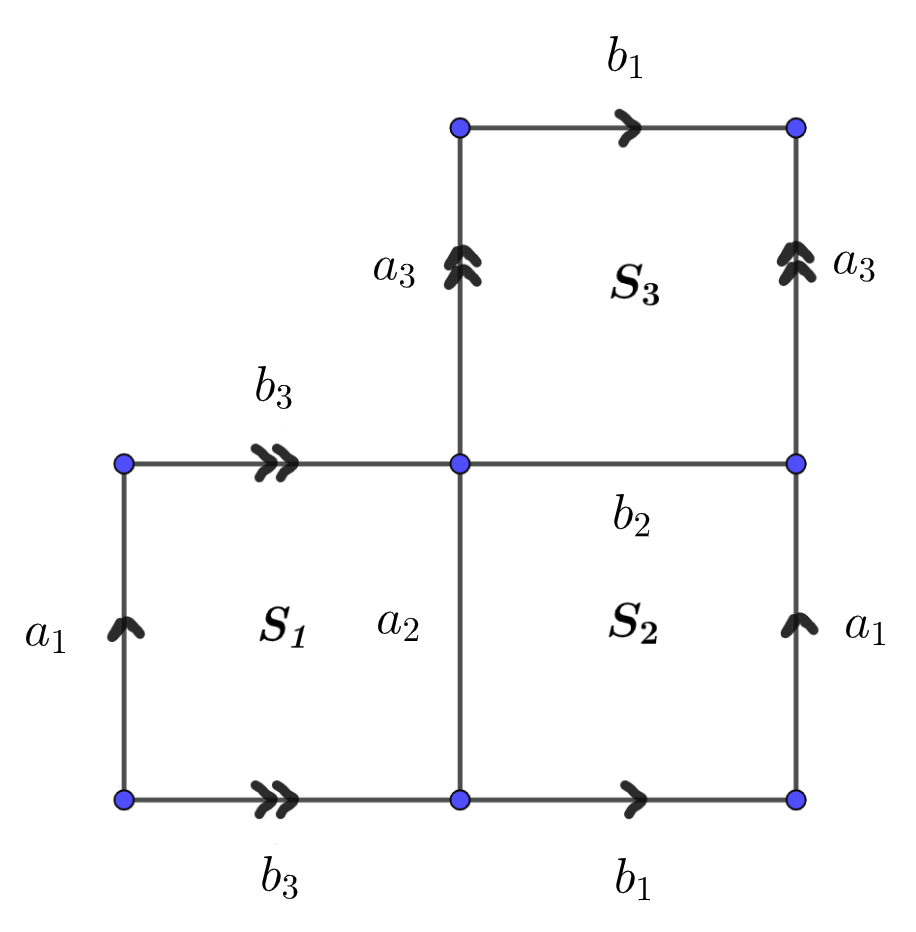}
   \caption{The genus two surface $S$}
    \label{Famous Example}
\end{figure}

After applying Lemma \ref{One Square} on the second square knowing from the first and the third squares that $a_1$ is conjugated to $a_2$ and $b_1$ is conjugated to $b_2$, we deduce that $[A_1-A_2]=[B_1-B_2]$, hence $\rho \mapsto [A_1-A_2]$ is $\Gamma$-invariant on $\Hom(\pi_1(S),\SU(2))$.

\subsection{Examples on the character variety of $N_4$}(Proof of Theorem \ref{Th1})

In this section, we will slightly modify the previous discussion by considering an arbitrary family of filling curves  $\gamma = \{ \gamma_1, \dots, \gamma_n \}$ on a surface $\Sigma$ (not necessarily consisting of two multi-curves).

Without loss of generality, one can assume that there are no three curves that intersect at a single point. Now one can reconstruct the closed surface using the combinatorial data of the curves $\gamma_1, \dots,\gamma_n$. Let $\gamma$ be an abstract family of curves, for each point of intersection $p$ consider a square centered at $p$ with edges transversal to the curves that perform the intersection at $p$. The gluing among the squares is then deduced by following the paths of the curves. As a result, if the surface is orientable, we get a $\frac{1}{4}$-translation surface i.e a Euclidean surface with identifications of the form  $z \mapsto i^{k}z+c$, for some $k \in \mathbb{Z}$ and $c \in \mathbb{C}$. To summarize:

\begin{prop}
The set of families of filling curves (possibly with self-intersections) is in bijection with the square-tiled $\frac{1}{4}$-translation surfaces, up to homeomorphisms.    
\end{prop}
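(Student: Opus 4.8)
The plan is to establish a bijection between families of filling curves (allowing self-intersections) and square-tiled $\frac{1}{4}$-translation surfaces by constructing maps in both directions and verifying they are mutually inverse up to homeomorphism. The content is mostly a careful bookkeeping argument, generalizing the earlier correspondence between pairs of filling multi-curves and ordinary square-tiled surfaces, so I expect no deep obstruction; the main work is checking that the gluing data is well-defined and that the resulting Euclidean structure has exactly the claimed transition maps $z \mapsto R_{\frac{k\pi}{2}}(z)+c$.

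First I would make precise the forward map, from curves to surfaces. Given an abstract family $\gamma = \{\gamma_1,\dots,\gamma_n\}$ of filling curves, after a preliminary isotopy I may assume the intersections are transverse and at most double (no triple points), as stated. For each intersection point $p$ I place a small Euclidean square centered at $p$, oriented so that its two pairs of opposite edges are transverse to the two local branches of curves meeting at $p$. The filling condition guarantees that the complement of $\gamma$ is a union of disks, so the squares, glued along edges by following each curve until it reaches the next intersection point, reconstruct the whole surface $\Sigma$ with no leftover complementary region other than disks that are filled by the squares themselves. I would emphasize that the square associated to $p$ is uniquely determined up to the choice of centering and that the combinatorial adjacency of edges is read directly off the cyclic order in which curves pass through neighboring intersections.

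Next I would identify the transition maps of the resulting Euclidean structure. Because every identification glues an edge of one square to an edge of another by carrying one branch of a curve to its continuation, the gluing is a Euclidean isometry preserving the (unoriented) transverse direction; the admissible such maps are exactly $z \mapsto R_{\frac{k\pi}{2}}(z)+c$. The key point to verify is that only right-angle rotations occur: since the edges of each square are transverse to the curves and the squares are axis-aligned with respect to the local curve directions, matching a horizontal/vertical edge of one square to an edge of the adjacent square can only rotate by a multiple of $\frac{\pi}{2}$. This is precisely where the self-intersections and the non-orientable possibilities introduce the full rotation group rather than only $z \mapsto \pm z + c$, distinguishing the $\frac{1}{4}$-translation case from the half-translation case treated earlier. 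I would also note that in the orientable case the cocycle condition around each vertex forces the rotations to be consistent, yielding a genuine $\frac{1}{4}$-translation structure.

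For the reverse map and the bijectivity, given a square-tiled $\frac{1}{4}$-translation surface I recover the family of curves as the cores of the \emph{cylinders} (or more precisely the geodesic representatives in the horizontal/vertical directions) passing through the squares, and the filling property follows because the squares tile the surface so their boundary curves cut it into disks. The two constructions are inverse up to homeomorphism: starting from curves, building the surface, and reading off its cores returns the same isotopy class of family, and vice versa, because both procedures are determined by the same combinatorial intersection data. The part requiring the most care is checking this reverse direction respects the equivalence "up to homeomorphism" on both sides — that two combinatorially equivalent families of curves produce homeomorphic surfaces and conversely — which I would handle by observing that all the data on each side is a finite combinatorial object (the pattern of intersections, equivalently the edge-gluing permutations with their rotation labels), so the bijection reduces to an identity of combinatorial types. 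I anticipate the genuine subtlety to lie not in any single step but in formulating the statement cleanly enough that "up to homeomorphism" means the same thing on both sides; once that is pinned down, the proof is a direct translation between the geometric and combinatorial descriptions.
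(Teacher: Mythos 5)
Your construction is the same as the paper's: the paper offers no formal proof of this proposition, only the preceding paragraph in which a square is centered at each intersection point with edges transverse to the intersecting branches and the gluings are read off by following the curves; your elaboration of the forward map, the identification of the transition maps as $z \mapsto R_{\frac{k\pi}{2}}(z)+c$, and the recovery of the curves as the closed geodesics running through the centers of the squares all match the intended argument and its analogy with the earlier multi-curve/origami correspondence.

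One point in your write-up is wrong, however, and you should fix it. You assert that ``the non-orientable possibilities introduce the full rotation group rather than only $z \mapsto \pm z + c$.'' This cannot be correct: every map of the form $z \mapsto R_{\frac{k\pi}{2}}(z)+c$ is orientation-preserving, so a surface assembled from Euclidean squares using only such identifications is automatically orientable. What the passage from two transverse multi-curves to a single self-intersecting family buys you is the quarter-rotations (since a curve may leave a square through an edge of either direction); what non-orientability requires is an orientation-reversing identification, i.e.\ a reflection, which is \emph{not} in the allowed group. The paper is careful about this: the sentence before the proposition says ``if the surface is orientable, we get a $\frac{1}{4}$-translation surface,'' and the non-orientable case is deferred to the subsequent remark, where a reflection along the $x$-axis is adjoined to the structure group. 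So either restrict your statement and proof to orientable $\Sigma$ (matching the proposition's intended scope), or enlarge the structure group to include reflections; as written, your argument would ``prove'' that the non-orientable examples of Section 5 carry orientable square-tiled models, which is false.
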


The fact that any $\frac{1}{4}$-translation surface has a half-translation surface as a double ramified cover implies:     

\begin{Cor}
Any family of filling curves is the image of two filling multi-curves via a ramified cover of degree two.  
\end{Cor}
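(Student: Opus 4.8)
The plan is to combine the two ingredients already in place: the bijection of the preceding Proposition, which presents the family $\gamma=\{\gamma_1,\dots,\gamma_n\}$ as a square-tiled $\frac14$-translation structure on $\Sigma$, and the quoted fact that such a surface is double-covered by a half-translation surface. What remains is to check that this cover carries exactly two filling multi-curves mapping onto $\gamma$.

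First I would read off the linear holonomy. The transition maps $z\mapsto R_{\frac{k\pi}{2}}(z)+c$ have linear parts in the cyclic group $G=\{1,i,-1,-i\}$ of rotations by multiples of $\pi/2$, giving a homomorphism $h\colon \pi_1(\Sigma\setminus \mathrm{Sing})\to G\cong\Z/4\Z$ on the complement of the cone points. Composing with the quotient $G\to G/\{\pm1\}\cong\Z/2\Z$ yields a class $\bar h\in H^1(\Sigma\setminus\mathrm{Sing};\Z/2\Z)$, whose kernel defines a degree-two cover $\pi\colon\tilde\Sigma\to\Sigma$, ramified exactly over the cone points of order-$4$ holonomy (cone angle an odd multiple of $\pi/2$). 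By construction the pulled-back holonomy lands in $\{\pm1\}$, so $\tilde\Sigma$ is a half-translation surface; since $\pi$ respects the square tiling, $\tilde\Sigma$ is again square-tiled, hence orientable.

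Next I would extract the two multi-curves. On $\tilde\Sigma$ the holonomy is trivial modulo $\{\pm1\}$, so the horizontal and vertical directions make global sense and $\tilde\Sigma$ splits into horizontal and vertical cylinders; by the converse construction of Section~\ref{Square-tiled S} their cores give multi-curves $\tilde\gamma$ and $\tilde\lambda$. I would then argue that $\pi^{-1}(\gamma)=\tilde\gamma\cup\tilde\lambda$: a curve of $\gamma$ switches between horizontal and vertical segments on $\Sigma$ exactly where $h$ takes an order-$4$ value, and passing to $\tilde\Sigma$ kills this $\pi/2$-ambiguity, so each lifted arc inherits a well-defined direction. Since $\gamma$ fills $\Sigma$, its complement is a union of disks, whose $\pi$-preimage is again a union of (possibly branched) disks; thus $\tilde\gamma\cup\tilde\lambda$ fills $\tilde\Sigma$, and $\pi(\tilde\gamma\cup\tilde\lambda)=\gamma$, as required.

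The crux is the identity $\pi^{-1}(\gamma)=\tilde\gamma\cup\tilde\lambda$, i.e. that the chosen $\Z/2$-cover is precisely the one that unwinds every $\pi/2$-turn of the curves, so that the preimage separates into one horizontal and one vertical multi-curve; this is forced by the definition $\bar h=h\bmod\{\pm1\}$, with the order-$4$ cone points accounting for the ramification. Two smaller points remain to settle: that $\pi$ extends over the branch points to a closed surface $\tilde\Sigma$ (immediate, since the local monodromy of $\bar h$ is nontrivial exactly at the order-$4$ cone points), and connectivity — if $\bar h=0$ then $\Sigma$ is already a half-translation surface and $\gamma$ is itself a pair of multi-curves, so one may take $\pi$ trivial, while if $\bar h\neq0$ the associated cover is connected.
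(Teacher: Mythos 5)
Your proof is correct and takes essentially the same route as the paper: the paper deduces the corollary in one line from the preceding bijection with square-tiled $\frac14$-translation surfaces together with the fact (stated without proof) that such a surface admits a half-translation double cover, which is precisely the holonomy cover you build from $\bar h = h \bmod \{\pm 1\}$. Your verification that the preimage of $\gamma$ splits into a horizontal and a vertical filling multi-curve upstairs, with ramification at the cone points of angle an odd multiple of $\pi/2$, is exactly the content the paper leaves implicit, and it is sound.
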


\begin{remark}
For a general closed surface $\Sigma$ (orientable or not), we shall add a reflection along the $x$-axis to the group of rotations and translations to get a structure corresponding to any family of curves on $\Sigma$. 
\end{remark}

On the $\SU(2)$-representation variety $\Hom(\pi_1(\Sigma),\SU(2))$. The group $\Gamma$ generated by the Dehn-twists along $\gamma_1,\dots, \gamma_n$ acts on it. Let us now denote the projection defined on Section \ref{Goldman Flow} associated with $\gamma_i$ by $P_i$. The previous discussion can be adapted to this situation: 

\begin{lemma}

A function that factors through $P_i$, for all $i \in \{1,\dots,n\}$, is a $\Gamma$-invariant function on $\Hom(\pi_1(\Sigma),\SU(2))$. 
    
\end{lemma}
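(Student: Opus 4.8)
The plan is to mirror the logic already established for the two–multi-curve case, where Proposition \ref{P6} shows that the ergodic components of $M_{\gamma_i}$ coincide almost everywhere with the fibers of the projection $P_i$. Concretely, I would first recall that each Dehn twist $\tau_{\gamma_i}$ acts via the Goldman flow $\Xi_{\gamma_i}$ by the identity $\tau^{*}_{\gamma_i}(\rho)=\Xi_{\gamma_i}^{\theta(\rho(\gamma_i^{+}))}(\rho)$. The crucial structural input from Section \ref{Goldman Flow} is that $\Xi_{\gamma_i}$ preserves precisely those functions that depend only on the restriction of $\rho$ to $\pi_1(\Sigma\setminus\gamma_i)$, and that the fibers of $P_i$ coincide with the orbits of $\Xi_{\gamma_i}$ (Proposition \ref{Pro4}, now applied to a single curve among the family rather than to a commuting multi-curve).

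The key step is then purely a matter of unwinding definitions. Suppose $g$ is a function on $\Hom(\pi_1(\Sigma),\SU(2))$ that factors through $P_i$ for every $i$, say $g=h_i\circ P_i$ for each index $i$. Since $P_i$ is by construction the restriction map to $\pi_1(\Sigma\setminus\gamma_i)$, the flow $\Xi_{\gamma_i}$ leaves $P_i$ invariant — this is the opening observation in the proof of Proposition \ref{Pro4} — and hence $g$ is constant along every $\Xi_{\gamma_i}$-orbit. Because $\tau_{\gamma_i}^{*}$ moves each $\rho$ along its own $\Xi_{\gamma_i}$-orbit (by the angle $\theta(\rho(\gamma_i^{+}))$), we obtain $g(\tau_{\gamma_i}^{*}\rho)=g(\rho)$ for every $i$. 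Thus $g$ is invariant under each generator of $\Gamma$, and since these generators generate $\Gamma$, it is $\Gamma$-invariant. I would present this as a short chain:
\[
g(\tau_{\gamma_i}^{*}\rho)=g\bigl(\Xi_{\gamma_i}^{\theta(\rho(\gamma_i^{+}))}(\rho)\bigr)=g(\rho),
\]
valid for all $i$, from which invariance under the full group is immediate.

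The only genuine subtlety — the step I expect to require the most care — is that in this section $\gamma$ is an \emph{arbitrary} family of filling curves, possibly with self-intersections and not decomposed into two disjoint multi-curves, so the individual $\gamma_i$ may be separating or may intersect one another. I would therefore be careful to invoke the single-curve version of the Goldman-flow description: the identity $\tau^{*}_{\gamma_i}=\Xi_{\gamma_i}^{\theta}$ and the fact that $\Xi_{\gamma_i}$ preserves exactly the $P_i$-fibers hold curve-by-curve, independently of how the curves sit relative to each other, since they rely only on the HNN or amalgamation decomposition of $\pi_1(\Sigma)$ along the single curve $\gamma_i$. In particular I would note that the argument does not require the flows $\Xi_{\gamma_i}$ to commute, which is what distinguishes this lemma from Proposition \ref{P6}; commutativity was used there to identify ergodic components with torus orbits, whereas here I only need each individual generator to preserve $g$. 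I would also flag the usual almost-everywhere caveat, excluding the locus where $\rho(\gamma_i^{+})=\pm\mathrm{Id}$ for some $i$, where the flow degenerates, so that the invariance statement is understood to hold off this measure-zero set.
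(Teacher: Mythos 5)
Your proposal is correct and follows exactly the route the paper intends: the paper states this lemma without a written proof, deferring to the earlier discussion in Section \ref{Goldman Flow}, and your argument is precisely that discussion made explicit — each generator $\tau_{\gamma_i}$ moves $\rho$ along its $\Xi_{\gamma_i}$-orbit, $P_i$ is constant on those orbits, so any function factoring through every $P_i$ is invariant under every generator and hence under $\Gamma$. Your added remarks (that commutativity of the flows is not needed here, unlike in Proposition \ref{P6}, and that the statement holds off the degenerate locus $\rho(\gamma_i^{+})=\pm\mathrm{Id}$) are accurate and make the write-up more careful than the paper's.
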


The following example is a genus $4$ non-orientable surface denoted $N_4$. The curves $\{\gamma_1,\gamma_2,\gamma_3\}$ are in minimal position since the geometric intersection between any pair of curves is one, therefore the curves $\{\gamma_1,\gamma_2,\gamma_3\}$ are filling the surface. As shown in Figure \ref{N4} below:

\begin{figure}[htp]
    \centering

\includegraphics[width=7.20cm]{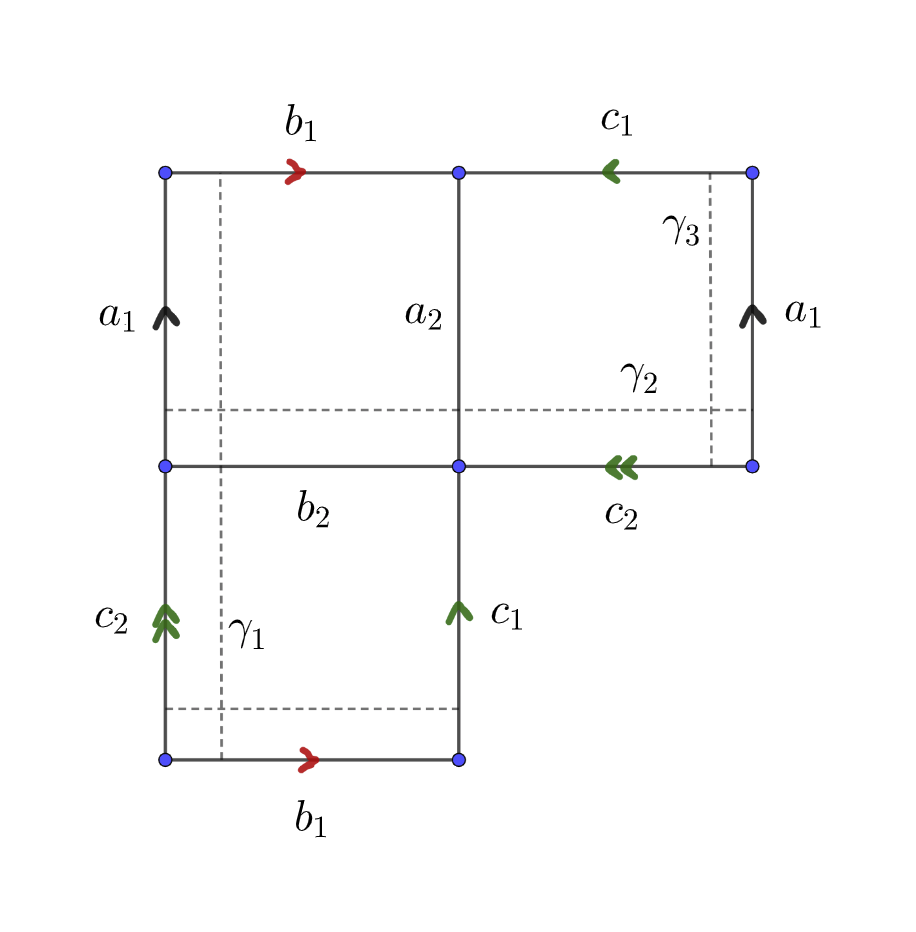}
   \caption{The non-orientable surface $N_4$}
    \label{N4}
\end{figure} The relation coming from the three squares above read: 
\begin{equation*}
    \begin{cases}
      a_1.b_1=b_2.a_2\\
      c_{2}^{-1}.a_1.c_1=a_2\\
     c_{2}^{-1}.b_1.c_1=b_2
     
    \end{cases}
\end{equation*}
For a representation $\rho \in \Hom(\pi_1(N_4),\SU(2))$, we write:   
\begin{equation*}
    \begin{cases}
      A_1.B_1=B_2.A_2\\
    C_{2}^{-1}.A_1.C_1=A_2\\
     C_{2}^{-1}.B_1.C_1=B_2
    
    \end{cases}
\end{equation*}

The group $\Gamma = \langle \tau_{\gamma_1},\tau_{\gamma_2},\tau_{\gamma_3} \rangle$ acts on $\Hom(\pi_1(N_4),\SU(2))$, and a function is $\Gamma$-invariant once it can be written in terms of each of the following 4-uplets $(A_1,A_2,B_1,B_2)$, $(A_1,A_2,C_1,C_2)$ and  $(B_1,B_2,C_1,C_2)$, simultaneously. 
Consider the isometry of $\mathbb{H}$, $\Phi_{C} : X \mapsto C_{2}^{-1}.X.C_1$. 
The system of equations can be rewritten as: 
\begin{equation*}
    \begin{cases}
      A_1.B_1=B_2.A_2\\
     \Phi_{C}(A_1)=A_2\\
     \Phi_{C}(B_1)=B_2
    \end{cases}
\end{equation*}
Taking the trace of the first equation we get: $$\tr(A_1.B_1)=\tr(A_2.B_2)$$
From the last two equations and since $\Phi_{C}$ is an isometry we deduce that the angle between $A_1$ and $B_1$ is equal to the angle between $A_2$ and $B_2$, in other words:
$$\tr(A_1.B_{1}^{-1})=\tr(A_2.B_{2}^{-1})$$
Adding the two previous equations and using the fundamental trace identity in $\SL(2,\mathbb{C})$ (i.e.  $\tr(XY)+\tr(XY^{-1})=\tr(X).\tr(Y)$, for any $X,Y \in \SL(2,\mathbb{C})$), we get: 
$$\tr(A_1)\tr(B_{1})=\tr(A_2)\tr(B_{2})$$
Finally, we deduce that the function: $$\frac{\tr(A_1)}{\tr(A_2)}=\frac{\tr(B_2)}{\tr(B_{1})}$$
is a $\Gamma$-invariant function on the $\SU(2)$-character variety of $N_4$. What is left to do is to check that the function is not constant, for this purpose, it is sufficient to prove that the pair $(A_1,A_2)$ can take any values in $\SU(2)^2$. 

\begin{lemma}
The projection $\rho \mapsto (A_1,A_2)$ from $\Hom(\pi_1(N_4),\SU(2))$ to $\SU(2)^2$ is surjective.      
\end{lemma}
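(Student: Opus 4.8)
The plan is to show surjectivity by exhibiting, for an arbitrary prescribed pair $(A_1,A_2) \in \SU(2)^2$, a genuine representation $\rho \in \Hom(\pi_1(N_4),\SU(2))$ realizing these values. The key observation is that the presentation we are working with is over-generated: we have the generators $A_1,A_2,B_1,B_2,C_1,C_2$ subject only to the three relations $A_1B_1 = B_2A_2$, $C_2^{-1}A_1C_1 = A_2$, and $C_2^{-1}B_1C_1 = B_2$, plus whatever single surface relation of $N_4$ comes from reading around the boundary of the complementary disks. The strategy is to fix $A_1$ and $A_2$ at their target values and then solve for $B_1,B_2,C_1,C_2$ with enough remaining freedom to satisfy all constraints.

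First I would handle the conjugacy relation $C_2^{-1}A_1C_1 = A_2$. Since $A_1$ and $A_2$ are arbitrary elements of $\SU(2)$, they need not be conjugate, so one cannot in general take $C_2 = C_1$; instead I would exploit the two independent parameters $C_1, C_2$. Given \emph{any} $A_1, A_2$, the equation $C_2^{-1}A_1C_1 = A_2$ can be solved: for instance fix $C_1$ freely and then set $C_2 = A_1 C_1 A_2^{-1}$, which lies in $\SU(2)$ automatically. This leaves $C_1 \in \SU(2)$ as a free parameter (three real dimensions) and determines $C_2$. Next, the relation $C_2^{-1}B_1C_1 = B_2$ simply \emph{defines} $B_2$ once $B_1$ is chosen, so it imposes no constraint and I would treat $B_1 \in \SU(2)$ as a second free parameter. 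At this stage $A_1, A_2$ are prescribed, $C_2, B_2$ are determined by $C_1, B_1$, and we have a two-parameter family (in $C_1$ and $B_1$, modulo the first relation) of candidate assignments.

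The remaining constraints are the square/rectangle relation $A_1 B_1 = B_2 A_2$ and the global surface relation defining $\pi_1(N_4)$. Substituting $B_2 = C_2^{-1}B_1C_1$ into $A_1B_1 = B_2A_2$ turns the first relation into a single equation relating $A_1, A_2, B_1, C_1$. I would argue that because $B_1$ and $C_1$ together range over $\SU(2)^2$ (six real dimensions) while the relation $A_1B_1 = B_2 A_2$ and the surface relation cut down only a bounded number of dimensions, the fiber over a generic $(A_1,A_2)$ is nonempty; more concretely, I would look for an explicit symmetric solution, e.g. choosing $C_1$ so that $\Phi_C$ becomes a convenient isometry and then solving the resulting equation for $B_1$ directly on the $3$-sphere $\SU(2)$. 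Exhibiting one solution for each $(A_1,A_2)$ suffices for surjectivity; I would not need the full fiber.

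The main obstacle I anticipate is verifying that the square relation $A_1 B_1 = C_2^{-1} B_1 C_1 A_2$ is genuinely solvable in $B_1$ for \emph{every} prescribed $(A_1, A_2)$, rather than only generically — in particular handling degenerate target values such as $A_1 = \pm \mathrm{Id}$ or $A_1, A_2$ central, where the conjugation step loses its freedom and the equation may behave differently. Rewriting this as a linear condition on $B_1 \in \mathbb{H} \cong \R^4$ (the relation $A_1 B_1 = C_2^{-1} B_1 C_1 A_2$ is $\mathbb{R}$-linear in $B_1$) and checking that the resulting linear map always admits a solution of unit norm is the crux; one must confirm the relevant operator is not identically singular on the unit sphere. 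The cleanest route is probably to use the residual freedom in $C_1$ to arrange a convenient configuration and then invoke the bi-invariance of the trace form from the start of this section to reduce the solvability to an angle/trace matching that I have, in effect, already established in deriving $\tr(A_1)\tr(B_1) = \tr(A_2)\tr(B_2)$.
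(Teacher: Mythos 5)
Your overall strategy --- prescribe $(A_1,A_2)$, treat the remaining generators as free parameters, and verify the three square relations (which, as you correctly suspect, are the only constraints; there is no further global relation) --- is viable, but the order in which you fix the parameters creates a genuine gap at exactly the point you flag as the crux. If you first choose $C_1$ ``freely'', set $C_2=A_1C_1A_2^{-1}$ and $B_2=C_2^{-1}B_1C_1$, the remaining relation becomes $A_1B_1A_2^{-1}=C_2^{-1}B_1C_1$, i.e.\ $B_1$ must be a nonzero fixed vector of the element $X\mapsto C_2A_1XA_2^{-1}C_1^{-1}$ of $\SO(4)$. A generic element of $\SO(4)$ of the form $X\mapsto PXQ$ has \emph{no} nonzero fixed vector: it rotates two orthogonal $2$-planes by the angles $\alpha\pm\beta$, where $\alpha,\beta$ are the angles of $P$ and $Q$, and a fixed vector exists precisely when $\tr(P)=\tr(Q)$. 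So for a generic $C_1$ your linear operator is injective and the equation has only the zero solution; in particular the phrase ``confirm the relevant operator is not identically singular'' points in the wrong direction --- you need the operator $X\mapsto A_1X-C_2^{-1}XC_1A_2$ to \emph{be} singular. To complete your route you must show that $C_1$ can always be chosen so that $\tr(C_2A_1)=\tr(C_1A_2)$ (with $C_2=A_1C_1A_2^{-1}$ substituted); this is a single linear condition on $C_1\in\mathbb{H}$, hence solvable on the unit sphere, and then the fixed space is a nonzero linear subspace and so contains unit vectors. None of this is in your write-up: the central solvability claim is asserted (via a dimension count, which does not rule out empty fibers) rather than proved.

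For comparison, the paper reverses the order and sidesteps the fixed-vector problem entirely: it first picks $B_1$ on the nonempty sphere $\{X\in\SU(2)\mid \tr(XA_1^{-1})=\tr(A_1XA_2^{-2})\}$, defines $B_2=A_1B_1A_2^{-1}$ so that the square relation holds by construction, and observes that the trace condition says exactly that the pairs of unit quaternions $(A_1,B_1)$ and $(A_2,B_2)$ have equal inner products; hence some isometry in $\SO(4)$ carries one pair to the other, and every such isometry has the form $X\mapsto C_2^{-1}XC_1$, producing $C_1,C_2$. Your approach can be repaired along the lines indicated above, but as written it leaves the decisive step unproved.
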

\begin{proof}
 For $(A_1,A_2) \in \SU(2)^2$, take $B_1$ in the sphere $$\{X \in \SU(2) \mid \tr(X.A_{1}^{-1})=\tr(A_1.X.A_{2}^{-2})\}$$Therefore $B_2=A_1.B_1.A_{2}^{-1}$. The last condition is equivalent to saying that the angle between $A_1$ and $B_1$ is equal to the angle between $A_2$ and $B_2$. This ensures the existence of an isometry $\Phi_{C}$ in $\SO(4)$ such that $\Phi_{C}(A_1)=A_2$ and
     $\Phi_{C}(B_1)=B_2$.
\end{proof}

\end{document}